 \newtheorem{thm}{Theorem}[section]
 \newtheorem{lem}[thm]{Lemma}
 \theoremstyle{definition}
 \theoremstyle{remark}
 \newtheorem*{ex}{Example}
 \numberwithin{equation}{section}
\newcommand{\R}{{\mathbb R}}
\newcommand{\C}{{\mathbb C}}
\newcommand{\N}{{\mathbb N}}
\begin{document}

%
%
%
%
%
%
%
%
%

\title[Sturm-Liouville problems with transfer condition]
 {Sturm-Liouville problems with transfer condition Herglotz dependent on the eigenparameter -- Hilbert space formulation}

\author[C. Bartels]{Casey Bartels}

\address{%
School of Mathematics\\
University of the Witwatersrand\\
Private Bag 3, P O WITS 2050, South Africa}

\email{casey.bartels098@gmail.com}

\author[S. Currie]{Sonja Currie}
\address{%
School of Mathematics\\
University of the Witwatersrand\\
Private Bag 3, P O WITS 2050, South Africa}

\email{Sonja.Currie@wits.ac.za}

\thanks{Supported in part by the Centre for Applicable Analysis and Number Theory and  by NRF grant number IFR160209157585 Grant no. 103530.}
\author[M. Nowaczyk]{Marlena Nowaczyk}
\address{AGH University of Science and Technology\\
Faculty of Applied Mathematics\\
al. A. Mickiewicza 30, 30-059 Krakow, Poland}
\email{mno@agh.edu.pl}
\author[B. A. Watson]{Bruce Alastair Watson}
\address{%
School of Mathematics\\
University of the Witwatersrand\\
Private Bag 3, P O WITS 2050, South Africa}

\email{Bruce.Watson@wits.ac.za}

\thanks{Supported in part by the Centre for Applicable Analysis and Number Theory and  by NRF grant number IFR170214222646 Grant no. 109289.}

\subjclass{Primary 34B24; Secondary 34A36, 34B07}

\keywords{Sturm-Liouville, Transmission condition}

\date{March 26, 2018}

\begin{abstract}
We consider a Sturm-Liouville equation
$\ell y:=-y'' + qy = \lambda y$
on the intervals $(-a,0)$ and $(0,b)$ with
$a,b>0$ and $q \in L^2(-a,b)$. We impose boundary conditions 
$y(-a)\cos\alpha = y'(-a)\sin\alpha$,
  $y(b)\cos\beta = y'(b)\sin\beta$,
 where $\alpha \in [0,\pi)$ and $\beta \in (0,\pi]$, together with transmission conditions rationally-dependent on the eigenparameter via 
\begin{align*}
-y(0^+)\left(\lambda \eta -\xi-\sum\limits_{i=1}^{N} \frac{b_i^2}{\lambda -c_i}\right) &= y'(0^+) - y'(0^-),\\
y'(0^-)\left(\lambda \kappa +\zeta-\sum\limits_{j=1}^{M}\frac{a_j^2}{\lambda -d_j}\right) &= y(0^+) - y(0^-),
\end{align*}
with
 $b_i, a_j>0$ for $i=1,\dots,N,$ and $j=1,\dots,M$.
 Here we take $\eta, \kappa \ge 0$ and $N,M\in \N_0$.
The geometric multiplicity of the eigenvalues  is considered and the cases in which the multiplicity can be $2$ are characterized. An example is given to illustrate the cases. A Hilbert space formulation of the above eigenvalue problem  as a self-adjoint operator eigenvalue problem in
$L^2(-a,b)\bigoplus \C^{N^*} \bigoplus \C^{M^*}$, for suitable $N^*,M^*$, is given. The Green's function and the resolvent of the related Hilbert space operator are expressed explicitly. 
\end{abstract}

\maketitle
\section{Introduction\label{sec-intro-0}}
There has been growing interest in  spectral problems involving differential operators with discontinuity conditions. We refer to such conditions as transmission conditions (see \cite{Dehghani-Akbarfam, 
Mukhtarov-Tunc, 
Wang-Sun-Hao-Yao}), although they appear under the guise of many names. These include point interactions in the physics literature, with important examples being the $\delta$ and $\delta'$ interactions from quantum mechanics (see for example \cite{Albeverio-Gesztesy-Krohn-Holden, Eckhardt} and the references therein); interface conditions,  \cite{Zhao-Sun-Zettl}; as well as matching conditions on graphs,  \cite{Yang}. 
For an interesting exposition of transmission condition problems that arise naturally in applications we refer the reader to the book by A. N. Tikhonov and A. A. Samarskii, \cite[Chapter II]{Tikhonov-Samarskii}.

 Direct and inverse problems for continuous Sturm-Liouville equations with eigenparameter dependent boundary conditions have been studied extensively (see 
 \cite{Binding-Browne-Watson1I, Binding-Browne-Watson1, MM, Schmid-Tretter, tretter} for a sample of the literature). Investigations into Sturm-Liouville equations with discontinuity conditions depending on the spectral parameter have been thus far limited to the affine case (see \cite{Akdogan-Demirci-Mukhtarov, Ozkan-Keskin1, Wang, Wei-Wei}) and affine dependence of the square root of the eigenparameter, see \cite{piv}. In particular, transmission conditions of the form
\begin{equation*}
\left[\begin{array}{c} y(0^{+})\\ y'(0^{+})\end{array}\right] = \left[\begin{array}{cc} c & 0\\ h(\lambda) & c^{-1}\end{array}\right] \left[\begin{array}{c} y(0^{-})\\ y'(0^{-})\end{array}\right],
\end{equation*}
where $c \in \mathbb{R}^{+}$ and $h$ is affine in $\lambda$ were considered in \cite{Wei-Wei}, and $c=1$ and $h(\lambda)=i\alpha\sqrt{\lambda}, \alpha>0$ in \cite{piv}.  Recently, the discontinuity condition 
\begin{equation*}
\left[\begin{array}{c} y_{1}(0^{+})\\ y_{2}(0^{+})\end{array}\right] = \left[\begin{array}{cc}c & 0\\ h(\lambda) & c^{-1}\end{array}\right] \left[\begin{array}{c} y_{1}(0^{-})\\ y_{2}(0^{-})\end{array}\right],
\end{equation*}
with $c \in \mathbb{R}^{+}$ and $h$ a polynomial in $\lambda$ was considered in \cite{Keskin} for the Dirac operator
\begin{equation*}
\left[\begin{array}{cc} 0 & 1\\ -1 & 0\end{array}\right] \frac{d Y}{dx} + \left[\begin{array}{cc} p(x) & q(x)\\ q(x) & r(x)\end{array}\right] Y = \lambda Y, \quad Y = \left[\begin{array}{c} y_{1}\\ y_{2}\end{array}\right],
\end{equation*}
with boundary conditions also polynomially dependent on the spectral parameter.

We consider a Sturm-Liouville equation
\begin{equation}\label{SL}
\ell y:=-y'' + qy = \lambda y
\end{equation}
on the intervals $(-a,0)$ and $(0,b)$ with
$y|_{(-a,0)}$, $y|_{(-a,0)}'$, $\ell y|_{(-a,0)} \in L^2(-a,0)$ and 
$y|_{(0,b)}$, $y|_{(0,b)}'$, $\ell y|_{(0,b)} \in L^2(0,b),$ where
$a,b>0$ and $q \in L^2(-a,b)$. We impose boundary conditions 
\begin{align}
  y(-a)\cos\alpha&=y'(-a)\sin\alpha,\label{BC=a}\\
  y(b)\cos\beta&=y'(b)\sin\beta,\label{BC=b}
\end{align}
 where $\alpha \in [0,\pi)$ and $\beta \in (0,\pi]$, and transmission conditions 
 \begin{align}
   y(0^+)\mu(\lambda)&=\Delta' y,\label{TX-0}\\
    y'(0^-)\nu(\lambda)&=\Delta y.\label{TX-1}
\end{align}
Here 
\begin{align}
  \Delta y &= y(0^+) - y(0^-),\nonumber\\ 
  \Delta' y &= y'(0^+) - y'(0^-),\nonumber\\
  \mu(\lambda ) &= -\left(\lambda \eta - \xi -\sum\limits_{i=1}^{N} \frac{b_i^2}{\lambda -c_i}\right),\label{mu}\\
  \nu(\lambda ) &= \lambda \kappa + \zeta -\sum\limits_{j=1}^{M} \frac{a_j^2}{\lambda -d_j},\label{nu}
\end{align}
with $\eta \geq 0$, $\kappa \geq 0$,
\begin{align}
&c_1<c_2<\cdots <c_{N},\label{order-mu}\\
&d_1<d_2<\cdots <d_{M},\label{order-nu}
\end{align}
and $b_i, a_j>0$ for $i=1,\dots,N,$ and $j=1,\dots,M$. We consider $N,M \in \mathbb{N}_0$, with $b_0 = a_0 = 0$.

Further, we will write
\begin{equation}\label{1/mu}
\frac{1}{\mu(\lambda)} = \sigma -\sum\limits_{i=1}^{N'} \frac{\beta_i^2}{\lambda -\gamma _i}, \quad \mbox{if } \eta>0,
\end{equation}
and 
\begin{equation}\label{1/nu}
\frac{1}{\nu(\lambda)} = \tau + \sum\limits_{j=1}^{M'} \frac{\alpha_j^2}{\lambda -\delta _j}, \quad \mbox{if } \kappa >0.
\end{equation}
Here $N', M' \in \mathbb{N}$ and
 \begin{align}
&\gamma_1<\gamma_2<\dots <\gamma_{N'},\label{order-1/mu}\\
&\delta_1<\delta_2<\dots <\delta_{M'},\label{order-1/nu}
\end{align}
with $\beta_i, \alpha_j>0$ for $i =1, \dots, N'$, and $j=1, \dots, M'$.

To the best of our knowledge, this is the first time that spectral properties of (\ref{SL}) with boundary conditions (\ref{BC=a})-(\ref{BC=b}) and transmission conditions dependent on the eigenparameter via general rational Nevanlinna-Herglotz functions (see (\ref{TX-0})-(\ref{TX-1})) have been studied. 

Note that for $\lambda$ a pole of $\mu(\lambda)$ we have that condition (\ref{TX-0}) becomes $y(0^+)=0$ which results in (\ref{TX-1}) becoming $y'(0^-)\nu(\lambda)=-y(0^-)$, resulting in two
separate eigenvalue problems on the intervals $(-a,0)$ and $(0,b)$;
if $\lambda$ is a zero of $\mu(\lambda)$ then (\ref{TX-0}) becomes $\Delta' y=0$, i.e. $y'(0^+)=y'(0^-)$.
Similarly if $\lambda$ is a pole of $\nu(\lambda)$ then (\ref{TX-1}) at $\lambda$ becomes $y'(0^-)=0$ and (\ref{TX-0}) can be expressed as $y(0^+)\mu(\lambda)=y'(0^+)$, again
resulting in separate eigenvalue problems on the intervals $(-a,0)$ and $(0,b)$;
while if $\lambda$ is a zero of $\nu(\lambda)$ then (\ref{TX-1}) becomes $\Delta y=0$, i.e. $y(0^-)=y(0^+)$. 

In Section \ref{sec-intro} the geometric multiplicity of the eigenvalues of (\ref{SL}), (\ref{BC=a})-(\ref{BC=b}) with (\ref{TX-0})-(\ref{TX-1}) is considered and the cases in which the multiplicity can be $2$ are characterized. An example is given to illustrate the cases. 
The Green's function of (\ref{SL}), (\ref{BC=a})-(\ref{BC=b}) with (\ref{TX-0})-(\ref{TX-1}) is given in Section \ref{green}.
A Hilbert space formulation of (\ref{SL}), (\ref{BC=a})-(\ref{BC=b}) with (\ref{TX-0})-(\ref{TX-1}) as a operator eigenvalue problem in
$L^2(-a,b)\bigoplus \C^{N^*} \bigoplus \C^{M^*}$, for suitable $N^*$ and $M^*$, is given in Section \ref{Hilbert}.  In Section \ref{section-self-adjoint} this Hilbert space operator is shown to be self-adjoint. The related Hilbert space resolvent operator is constructed in Section
\ref{section-resolvent}.


\section{Geometric multiplicity\label{sec-intro}}

\begin{lem}\label{simple}
 All eigenvalues of  (\ref{SL})-(\ref{TX-1}) not at poles of $\mu(\lambda)$ or $\nu(\lambda)$ are geometrically simple. In this case the transmission conditions (\ref{TX-0})-(\ref{TX-1}) can be expressed as
$$\left[\begin{array}{c} y(0^+)\\ y'(0^+)\end{array}\right]=T\left[\begin{array}{c} y(0^-)\\ y'(0^-)\end{array}\right], \mbox{ where }T = \left[\begin{array}{cc} 1&\nu(\lambda)\\ \mu(\lambda)&1+ \mu(\lambda)\nu(\lambda) \end{array}\right].$$
 \end{lem}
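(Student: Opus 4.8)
The plan is to split the statement into two parts: first deriving the matrix form of the transmission conditions when $\lambda$ is not a pole of $\mu$ or $\nu$, and then using that form to read off geometric simplicity. For the first part, note that away from the poles of $\mu(\lambda)$ and $\nu(\lambda)$ both functions are finite real numbers, so (\ref{TX-1}) reads $\nu(\lambda)y'(0^-) = y(0^+)-y(0^-)$, which rearranges to $y(0^+) = y(0^-)+\nu(\lambda)y'(0^-)$ — this is the first row of $T$. Substituting this expression for $y(0^+)$ into (\ref{TX-0}), namely $\mu(\lambda)y(0^+) = y'(0^+)-y'(0^-)$, gives $y'(0^+) = y'(0^-)+\mu(\lambda)\bigl(y(0^-)+\nu(\lambda)y'(0^-)\bigr) = \mu(\lambda)y(0^-) + (1+\mu(\lambda)\nu(\lambda))y'(0^-)$, which is the second row of $T$. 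Thus $(y(0^+),y'(0^+))^{t} = T\,(y(0^-),y'(0^-))^{t}$ with $T$ as stated; observe $\det T = 1$, so $T$ is invertible, which will be used below.

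For the second part, suppose $\lambda$ is an eigenvalue not at a pole of $\mu$ or $\nu$, and let $y$ be a corresponding eigenfunction. On $(-a,0)$, $y$ solves $\ell y = \lambda y$ together with the boundary condition (\ref{BC=a}) at $-a$; since (\ref{BC=a}) is a one-dimensional condition on the two-dimensional solution space of the ODE on $(-a,0)$, the set of solutions on $(-a,0)$ satisfying (\ref{BC=a}) is at most one-dimensional, hence $y|_{(-a,0)}$ is determined up to a scalar multiple. Its Cauchy data $(y(0^-),y'(0^-))$ is therefore determined up to that same scalar; by the matrix relation just established, $(y(0^+),y'(0^+)) = T(y(0^-),y'(0^-))$ is also determined up to the same scalar, and then $y|_{(0,b)}$, being the solution of $\ell y = \lambda y$ with this Cauchy data at $0^+$, is determined up to the same scalar by uniqueness for the initial value problem. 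Hence the eigenspace is at most one-dimensional, i.e. the eigenvalue is geometrically simple. (One should also remark that $(y(0^-),y'(0^-)) \neq (0,0)$, else $y|_{(-a,0)}\equiv 0$ and, by invertibility of $T$, $y|_{(0,b)}\equiv 0$ too, contradicting $y$ being an eigenfunction; this confirms the reduction is genuinely to the one-dimensional span and not the zero space.)

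I do not anticipate a serious obstacle here: the computation producing $T$ is a direct substitution, and the simplicity argument is the standard "one endpoint condition pins down Cauchy data, transfer matrix propagates it, IVP uniqueness finishes" argument. The one point requiring a little care is precisely the hypothesis that $\lambda$ is not a pole of $\mu$ or $\nu$ — at a pole the corresponding transmission condition degenerates to a Dirichlet/Neumann-type condition at $0$ (as noted in the introduction) and decouples the two intervals, so the transfer matrix $T$ does not exist and the dimension count breaks, which is exactly why the lemma excludes that case and why multiplicity $2$ becomes possible there.
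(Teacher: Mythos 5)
Your proposal is correct and follows essentially the same route as the paper: the paper's (one-line) proof likewise relies on the invertibility of the transfer matrix $T$ together with the fact that the boundary condition (\ref{BC=a}) cuts the solution space on $(-a,0)$ down to one dimension, which $T$ and IVP uniqueness then propagate to $(0,b)$. Your write-up merely makes explicit the direct substitution yielding $T$ and the nonvanishing of the Cauchy data at $0^-$, both of which the paper leaves implicit.
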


\begin{proof}
 As $T$ is invertible, imposing (\ref{BC=a}) restricts the solution space of (\ref{SL}) to one dimension.
 \end{proof}

\begin{thm}\label{double}
 The maximum geometric multiplicity of an eigenvalue of  (\ref{SL})-(\ref{TX-1}) is $2$ and such eigenvalues 
 can only occur at poles of $\mu(\lambda)$ or $\nu(\lambda)$.  An eigenvalue $\lambda$ has geometric multiplicity $2$ if and only if: 
\begin{enumerate}
\item[I.] $\lambda$ is a pole of $\mu$ and
 an eigenvalue of  (\ref{SL}) on $(-a,0)$ with 
    boundary conditions (\ref{BC=a}) and $\nu(\lambda)y'(0^-)+y(0^-)=0$, and an
    eigenvalue of (\ref{SL}) on $(0,b)$ with
     boundary conditions $y(0^+)=0$ and (\ref{BC=b}); or
\item[II.] $\lambda$ is a pole of $\nu$ and
  eigenvalue of  (\ref{SL}) on $(-a,0)$ with 
    boundary conditions (\ref{BC=a}) and $y'(0^-)=0$, and an
    eigenvalue of (\ref{SL}) on $(0,b)$ with
     boundary conditions $y(0^+)\mu(\lambda)=y'(0^+)$ and (\ref{BC=b}); or
\item[III.] $\lambda$ is a pole of both $\mu$ and $\nu$ and
  an eigenvalue of  (\ref{SL}) on $(-a,0)$ with 
    boundary conditions (\ref{BC=a}) and $y'(0^-)=0$, and an
    eigenvalue of (\ref{SL}) on $(0,b)$ with
     boundary conditions $y(0^+)=0$ and (\ref{BC=b}).
\end{enumerate}
  \end{thm}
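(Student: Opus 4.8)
The plan is to analyze separately the three scenarios where a pole is shared by $\mu$, by $\nu$, or by both, using the key observation already recorded just after \eqref{order-1/nu} in the introduction: at a pole of $\mu$ condition \eqref{TX-0} forces $y(0^+)=0$ and \eqref{TX-1} collapses to $\nu(\lambda)y'(0^-)+y(0^-)=0$, while at a pole of $\nu$ condition \eqref{TX-1} forces $y'(0^-)=0$ and \eqref{TX-0} becomes $y(0^+)\mu(\lambda)=y'(0^+)$. First I would dispose of the upper bound: by Lemma~\ref{simple} any eigenvalue not at a pole is simple, so multiplicity $2$ can only occur at poles; and at a pole the transmission conditions decouple the problem into one boundary value problem on $(-a,0)$ and one on $(0,b)$, each of which is a regular (self-adjoint-type) Sturm-Liouville problem whose solution space for a fixed $\lambda$, once the left/right endpoint condition is imposed, is at most one-dimensional. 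Hence the total eigenspace has dimension at most $2$, and it equals $2$ precisely when \emph{both} one-sided problems have $\lambda$ as an eigenvalue simultaneously, with the ``interface'' conditions at $0$ being exactly those that remain after the collapse. This already gives the ``if'' direction in each of the three cases, since a nonzero solution on $(-a,0)$ extended by $0$ on $(0,b)$, together with a nonzero solution on $(0,b)$ extended by $0$ on $(-a,0)$, are linearly independent and each satisfies \eqref{SL}--\eqref{TX-1}.

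For the ``only if'' direction I would argue as follows. Suppose $\lambda$ is a pole of $\mu$ only (Case~I). Take two linearly independent eigenfunctions $y,\tilde y$. As noted, each must satisfy $y(0^+)=0=\tilde y(0^+)$. If both restrictions to $(0,b)$ were nonzero, then since the space of solutions of \eqref{SL} on $(0,b)$ meeting \eqref{BC=b} is one-dimensional, the two restrictions would be proportional; subtracting a suitable multiple produces a nontrivial eigenfunction vanishing identically on $(0,b)$, and then the surviving condition $\nu(\lambda)y'(0^-)+y(0^-)=0$ forces $\lambda$ to be an eigenvalue of the left problem. If instead one restriction to $(0,b)$ is zero, the same reasoning on the \emph{other} function (or a further combination) again yields that $\lambda$ is a left eigenvalue. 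Symmetrically, projecting onto the left interval and using that the left solution space meeting \eqref{BC=a} is one-dimensional forces $\lambda$ to be a right eigenvalue of \eqref{SL} on $(0,b)$ with $y(0^+)=0$. Since $\lambda$ is not a pole of $\nu$, at least one of the two one-sided problems is non-degenerate in the sense that its eigenfunction does not extend to an eigenfunction of the full problem unless the other side also contributes; a counting argument (each side can supply at most a one-dimensional contribution to a two-dimensional eigenspace) then shows $\lambda$ must be an eigenvalue of \emph{both} one-sided problems. Case~II is the mirror image, using $y'(0^-)=0$ and $y(0^+)\mu(\lambda)=y'(0^+)$; Case~III combines both collapses, so both interface data $y(0^+)$ and $y'(0^-)$ vanish and the two problems are the fully Dirichlet-at-$0$ / Neumann-at-$0$ split.

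The main obstacle I anticipate is the bookkeeping in the ``only if'' direction: one must rule out the possibility that a two-dimensional eigenspace arises from a single side while the other side contributes nothing, which would require a genuinely two-dimensional solution set of \eqref{SL} on one interval subject to one endpoint condition plus the relevant collapsed interface condition. This cannot happen because a second-order equation with one linear endpoint constraint always has at most a one-dimensional solution space at fixed $\lambda$, \emph{provided} the collapsed interface condition is not automatically satisfied by every solution; the delicate point is precisely when $\nu(\lambda)$ (resp.\ $\mu(\lambda)$) is finite and nonzero at the pole of the other function, so that $\nu(\lambda)y'(0^-)+y(0^-)=0$ (resp.\ $y(0^+)\mu(\lambda)=y'(0^+)$) is a genuine nontrivial constraint — this is where the hypotheses \eqref{order-mu}--\eqref{order-nu} separating the poles $c_i$ from the poles $d_j$ (or allowing a common pole, handled in Case~III) enter. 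Once this case distinction is laid out cleanly, the equivalences follow by matching, in each case, the list of surviving conditions at $0$ with the one-sided boundary conditions in items I, II and III.
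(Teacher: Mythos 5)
Your proposal is correct and follows essentially the same route as the paper: at a pole the transmission conditions decouple the problem into two one-sided Sturm--Liouville problems, each contributing at most one dimension to the eigenspace, and extension by zero shows multiplicity $2$ occurs exactly when both one-sided problems share the eigenvalue. The paper's proof is simply a terser version of this; your ``only if'' bookkeeping and the worry about the collapsed interface condition being automatically satisfied are non-issues, since that condition is always a nontrivial linear constraint on the Cauchy data at $0$ and the eigenspace is plainly the direct sum of the two one-sided eigenspaces.
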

\begin{proof}
 The conclusion that these are only instances in which
 non-simple eigenvalues are possible follows from Lemma \ref{simple}. That the maximum geometric multiplicity is $2$ in the given circumstances follows from the maximum geometric multiplicity of the resulting eigenvalue problems on the intervals $(-a,0)$ and on $(0,b)$ being $1$.
 If $\lambda$ is simultaneously an eigenvalue of the problems on the intervals $(-a,0)$ and on $(0,b)$ with say eigenfunctions $u$ on  $(-a,0)$ and $v$ on $(0,b)$, then extending $u$ and $v$ by zero to $(-a,0)\cup (0,b)$ gives two linearly independent eigenfunctions for (\ref{SL})-(\ref{TX-1}). 
 \end{proof}
 
{\bf Note} Let  
$$\varphi(\lambda)=\frac{\begin{displaystyle}\prod_{j=1}^{m}(s_j-\lambda)\end{displaystyle}}{\begin{displaystyle}\prod_{k=1}^{m}(r_k-\lambda)\end{displaystyle}}$$ where  $r_1<s_1<r_2<s_2<\dots<s_{m-1}<r_m<s_m$ then
$$\varphi(\lambda)=1-\sum_{k=1}^m \frac{K_k}{\lambda-r_k}$$
with
$$K_i=\frac{\begin{displaystyle}\prod_{j=1}^{m}(s_j-r_i)\end{displaystyle}}{\begin{displaystyle}\prod_{k\ne i}(r_k-r_i)\end{displaystyle}}>0.$$
If instead $s_1<r_1<s_2<\dots<r_{m-1}<s_m<r_m$ then
$K_i<0$ for all $i=1,\dots,m$.

\begin{thm}
 For any $N,M\in \N_0$ there are potentials $q\in L^2(-\pi,\pi)$ and parameters
 $c_{1} < c_{2} < \cdots < c_{N}$,
 $d_{1} < d_{2} < \cdots < d_{M}$, $\eta,\kappa \ge 0$
and $b_i, a_j>0$ for $i=1,\dots,N,$ and $j=1,\dots,M$ such that (\ref{SL})-(\ref{TX-1}) with $a=b=\pi$ has precisely $N+M$ double eigenvalues (the maximum number possible).
\end{thm}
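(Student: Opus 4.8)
The plan is to decouple the problem over $(-\pi,0)$ and $(0,\pi)$ and reduce it to an inverse spectral construction on each interval. By Lemma~\ref{simple} every eigenvalue that is not a pole of $\mu$ or of $\nu$ is geometrically simple, and by Theorem~\ref{double} a pole of $\mu$ (resp.\ of $\nu$) is a double eigenvalue precisely when it is an eigenvalue of \emph{both} decoupled single-interval problems of Case~I (resp.\ Case~II). As $\mu$ has exactly the $N$ poles $c_1,\dots,c_N$ and $\nu$ exactly the $M$ poles $d_1,\dots,d_M$, it therefore suffices to choose $q\in L^2(-\pi,\pi)$, $\alpha,\beta$ and $\mu,\nu$ of the forms \eqref{mu},\eqref{nu} so that: (i) the $N+M$ numbers $c_1,\dots,c_N,d_1,\dots,d_M$ are pairwise distinct; (ii) each $c_i$ is an eigenvalue of \eqref{SL} on $(-\pi,0)$ with \eqref{BC=a} and $\nu(c_i)y'(0^-)+y(0^-)=0$, and of \eqref{SL} on $(0,\pi)$ with $y(0^+)=0$ and \eqref{BC=b}; (iii) each $d_j$ is an eigenvalue of \eqref{SL} on $(-\pi,0)$ with \eqref{BC=a} and $y'(0^-)=0$, and of \eqref{SL} on $(0,\pi)$ with $\mu(d_j)y(0^+)=y'(0^+)$ and \eqref{BC=b}. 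Under (i)--(iii), Theorem~\ref{double} makes each of the $N+M$ distinct numbers $c_i,d_j$ a double eigenvalue, and Lemma~\ref{simple} excludes all other multiple eigenvalues; so there are exactly $N+M$ of them, which those two results also show is the largest number possible.

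Introduce the solutions $\phi_-(\cdot,\lambda)$ of \eqref{SL} on $(-\pi,0)$ with \eqref{BC=a} and $\phi_+(\cdot,\lambda)$ of \eqref{SL} on $(0,\pi)$ with \eqref{BC=b}, and the Weyl functions $m_\pm(\lambda)=\phi_\pm'(0,\lambda)/\phi_\pm(0,\lambda)$ at the interface; up to sign each $m_\pm$ is a meromorphic Herglotz function with the asymptotics of a regular problem of length $\pi$, with poles at the Dirichlet-at-$0$ eigenvalues and zeros at the Neumann-at-$0$ eigenvalues of the relevant half-problem. As long as $\phi_\pm(0,\cdot)$ is nonzero at the points concerned --- which the construction guarantees by placing the Dirichlet-at-$0$ spectra suitably --- conditions (ii)--(iii) say that $m_+$ has poles at $c_1,\dots,c_N$ with $m_+(d_j)=\mu(d_j)$ for all $j$, and $m_-$ has zeros at $d_1,\dots,d_M$ with $m_-(c_i)=-1/\nu(c_i)$ for all $i$. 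Now fix pairwise distinct negative reals $c_1<\dots<c_N<d_1<\dots<d_M$ and take $\mu,\nu$ with $\eta=\kappa=0$, the stated poles, and unit residues, say $\mu(\lambda)=\xi+\sum_{i=1}^N(\lambda-c_i)^{-1}$, $\nu(\lambda)=\zeta-\sum_{j=1}^M(\lambda-d_j)^{-1}$, with $\xi,\zeta$ chosen so that $\nu(c_i)\neq 0$ for all $i$ (excluding only finitely many $\zeta$). The quantities $\mu(d_j)$ and $-1/\nu(c_i)$ are then fixed finite numbers.

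It remains to produce $q_+\in L^2(0,\pi)$, $\beta$ with $m_+$ having poles at the $c_i$ and values $\mu(d_j)$ at the $d_j$, and symmetrically $q_-\in L^2(-\pi,0)$, $\alpha$ with $m_-$ having zeros at the $d_j$ and values $-1/\nu(c_i)$ at the $c_i$. This is classical inverse spectral theory. For the right interval, for instance: starting from $q_+=0$, prescribe the Dirichlet-at-$0$ spectrum to consist of $c_1,\dots,c_N$ followed by a tail with the correct $\sim n^2$ asymptotics and avoiding the $d_j$, and then use the remaining free data --- the interlacing Neumann-at-$0$ eigenvalues, equivalently the norming constants --- to satisfy the $M$ conditions $m_+(d_j)=\mu(d_j)$. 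Because these enter the conditions with coefficients $O(n^{-2})$ at the tail, a minimal adjustment keeps the norming constants positive and the asymptotics intact, so Gelfand--Levitan reconstruction yields a genuine $q_+\in L^2$ and far-end boundary parameter $\beta$; the left interval is handled in the same way. With all data so chosen, (i)--(iii) hold and \eqref{SL}--\eqref{TX-1} with $a=b=\pi$ has precisely $N+M$ double eigenvalues.

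The hard part is this inverse step: one must check that the $N+M$ prescribed Weyl-function values, together with the prescribed poles and zeros, can be realised simultaneously in a way compatible with Herglotz monotonicity, the correct eigenvalue asymptotics, and positivity of the norming constants. Taking all of $c_1,\dots,d_M$ far to the left and well separated removes any monotonicity obstruction, after which the remaining conditions form finitely many linear equations with an invertible Cauchy-type coefficient matrix and a positive solution; checking the asymptotic and positivity bookkeeping there is the real content, the rest being routine.
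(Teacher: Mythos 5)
Your reduction of the theorem to the three conditions (i)--(iii) via Lemma \ref{simple} and Theorem \ref{double} is correct and is exactly the reduction the paper makes. The gap is in the realization step. Having fixed $c_1,\dots,c_N,d_1,\dots,d_M$ and $\mu,\nu$ \emph{in advance}, you must produce a potential on $(0,\pi)$ whose Weyl function $m_+$ has poles at the prescribed points $c_i$ \emph{and} takes the prescribed values $\mu(d_j)$ at $M$ further prescribed points $d_j$ off the spectrum (and symmetrically on the left). Prescribing values of an $m$-function at points that are not eigenvalues is not covered by the classical two-spectra or spectrum-plus-norming-constants theorems; it is an interpolation problem for Herglotz functions subject to the positivity and asymptotic constraints characterizing $m$-functions of $L^2$ potentials. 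You flag this yourself as ``the real content'' but then assert it rather than prove it: the Cauchy-type linear system need not admit a solution keeping the perturbed norming constants positive for a fixed choice of which constants to adjust; the required adjustments are not small, since the discrepancies $\mu(d_j)-m_+(d_j)$ are fixed finite numbers determined by data chosen before the potential; and when several of the $c_i$ fall in a single gap of the Dirichlet-at-$0$ spectrum of the left problem, the monotonicity of $m_-$ on that gap forces order constraints on the targets $-1/\nu(c_i)$ that you have not checked against your choice of $\nu$. As written this is a plausible programme, not a proof.

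The paper avoids the interpolation problem entirely by ordering the choices differently. It fixes the right half explicitly ($q=0$ on $[0,\pi]$, Dirichlet conditions), chooses $\mu$ so that its poles $i^2$ are Dirichlet--Dirichlet eigenvalues of that half and its zeros $(k-\tfrac12)^2$ are Neumann--Dirichlet eigenvalues, and then \emph{defines} the $d_j$ to be eigenvalues of the resulting $\lambda$-dependent problem $\mu(\lambda)y(0^+)=y'(0^+)$ on $[0,\pi]$, rather than prescribing them beforehand; $\nu$ is then built with poles at these $d_j$ and zeros $\delta_j$ interlacing them. The only inverse-spectral input needed is the classical two-interlacing-spectra theorem (\cite[Theorem 3.4.3]{Mar}), applied once to manufacture $q$ on $[-\pi,0]$ with $\{d_j\}$ contained in its Neumann-at-$0$ spectrum and $\{\delta_j\}$ in its Dirichlet-at-$0$ spectrum. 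If you reorder your construction in the same way --- fix one half-problem and $\mu$ first, harvest the $d_j$ from it, then choose $\nu$ and build the other half by the two-spectra theorem --- your argument closes without any Weyl-function value interpolation.
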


\begin{proof}
Assume that $N\le M$.
We take boundary conditions $y(\pm\pi)=0$ and set $q(x)=0$ for $x\in [0,\pi]$.
Now $1^2/4,3^2/4,\dots,(2N-1)^2/4$ are eigenvalues of (\ref{SL}) on $[0,\pi]$ with
 boundary conditions $y(\pi)=0=y'(0^+)$, while
 $1^2,2^2,\dots,N^2$ are eigenvalues of (\ref{SL}) on $[0,\pi]$ with
 boundary conditions $y(\pi)=0=y(0^+)$.
 In particular $\lambda=1^2,2^2,\dots,N^2$ are eigenvalues of  
  (\ref{SL}) on $[0,\pi]$ with
 boundary conditions $y(\pi)=0$ and $\mu(\lambda)y(0^+)=y'(0^+)$
where
$$\mu(\lambda)=\frac{\begin{displaystyle}\prod_{k=1}^{N}\left(\left(k-\frac{1}{2}\right)^2-\lambda\right)\end{displaystyle}}{\begin{displaystyle}\prod_{i=1}^{N}(i^2-\lambda)\end{displaystyle}}=1+\sum_{i=1}^N\frac{b_i^2}
{(\lambda-i^2)}$$
where
$b_i>0, i=1,\dots,N$.

 Let $d_j=(j-1/2)^2$ for $j=1,\dots,N$ and $d_{N+1}<\dots<d_M$ be eigenvalues of 
 (\ref{SL}) on $[0,\pi]$ with
 boundary conditions $y(\pi)=0$ and $\mu(\lambda)y(0^+)=y'(0^+)$ with $d_{N+1}>N^2$.
 Define $\delta_j=j^2,$ for $j=1,\dots,N$, and $\delta_j=(d_{j+1}+d_{j})/2$, $j=N+1,\dots,M-1$
 and $\delta_M>d_M$.
  Let
$$\nu(\lambda)=\frac{\begin{displaystyle}\prod_{j=1}^{M}(\delta_j-\lambda)\end{displaystyle}}{\begin{displaystyle}\prod_{k=1}^{M}(d_k-\lambda)\end{displaystyle}}=1-\sum_{j=1}^M\frac{a_j^2}
{(\lambda-d_j)}$$
where $a_j>0, j=1,\dots,M$.

We now take $q$ on $[-\pi,0)$ to be an $L^2$ potential so that the eigenvalues of (\ref{SL})
on  $[-\pi,0)$ with boundary condition $y(-\pi)=0$ and $y(0^-)=0$ contains the set $\{\delta_1, \dots,\delta_{M}\}$ while  the eigenvalues of (\ref{SL})
on  $[-\pi,0)$ with boundary condition $y(-\pi)=0$ and $y^\prime(0^-)=0$ contains the set $\{d_1, \dots,d_M\}$. 
This is possible via the Gelfand-Levitan theory of inverse spectral problems, see for example
Marcenko \cite[Theorem 3.4.3]{Mar},  since $d_1<\delta_1<d_2<\delta_2<\dots<\delta_{M-1}<d_M<\delta_M$. 

Now $d_1,\dots,d_M$ are poles of $\nu$, so for $\lambda=d_i, i=1,\dots,M,$ the transmission conditions become $y'(0^-)=0$ and $\mu(\lambda)y(0^+)=y'(0^+)$.
By construction of $q$, we have that $d_1,\dots,d_M$ are eigenvalues of the Sturm-Liouville problem on $[-\pi,0]$ with boundary conditions $y(-\pi)=0=y'(0^-)$. Also by construction of
$d_1,\dots,d_M$ they are eigenvalues of the Sturm-Liouville problem on $[0,\pi]$ with boundary 
conditions $\mu(\lambda)y(0^+)=y'(0^+), y(\pi)=0$. Hence $d_1,\dots,d_M$ are double 
eigenvalues of the Sturm-Liouvlle problem on $[-\pi,\pi]$ with transmission condition at $0$ and boundary conditions $y(-\pi)=0=y(\pi)$.

Also $\lambda=\delta_1,\dots,\delta_N$ are poles of $\mu$ so that at these values of $\lambda$ the transmission conditions become $y(0^+)=0, \nu(\lambda)y'(0^-)=-y(0^-)$, but by construction  $\delta_1,\dots,\delta_N$ are zeros of $\nu$. So the transmission conditions become  $y(0^+)=0=y(0^-)$. By construction of $q$, $\delta_1,\dots,\delta_N,$ are eigenvalues of the Sturm-Liouville
problem on $[-\pi,0]$ with boundary conditions $y(-\pi)=0=y(0^-)$. 
However by choice of $\delta_1,\dots,\delta_N$, they are eigenvalues of the Sturm-Liouville problem
on $[0,\pi]$ with boundary conditions $y(0^+)=0=y(\pi)$.

Hence the problem has at least $M+N$ double eigenvalues, which we know to be the maximum possible.     
\end{proof}

We note that using similar methods to those of the above proof, it can be shown that any number of eigenvalues between $0$ and $M+N$ can be constructed to be double.
Due to notational opacity we will only present a proof of the other extreme case, that of no 
double eigenvalues.

\begin{thm}
For any $N,M\in \N_0$ and potentials $q=0$ there are parameters
 $c_{1} < c_{2} < \cdots < c_{N}$,
 $d_{1} < d_{2} < \cdots < d_{M}$, $\eta,\kappa \ge 0$
and $b_i, a_j>0$ for $i=1,\dots,N,$ and $j=1,\dots,M$ such that (\ref{SL})-(\ref{TX-1}) with $a=b=\pi$, $\alpha=0$, $\beta=\pi$,
 has no double eigenvalues.
\end{thm}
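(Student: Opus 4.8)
The plan is to exploit the characterization in Theorem~\ref{double}: a double eigenvalue can only occur at a pole of $\mu$ or $\nu$, and only when that pole is \emph{simultaneously} an eigenvalue of the two decoupled half-line problems listed in cases I--III. So the strategy is to choose all the poles $c_i$ of $\mu$ and $d_j$ of $\nu$ so that they avoid the relevant Dirichlet/Neumann-type spectra on $(-\pi,0)$ and $(0,\pi)$. With $q\equiv 0$, $a=b=\pi$, $\alpha=0$, $\beta=\pi$ the boundary conditions are $y(-\pi)=0$ and $y(\pi)=0$, and all the auxiliary half-line spectra appearing in Theorem~\ref{double} are explicitly computable: the eigenvalues of $-y''=\lambda y$ on $(0,\pi)$ with $y(0^+)=0=y(\pi)$ are $\{k^2:k\in\N\}$; with $y'(0^+)=0=y(\pi)$ they are $\{(k-\tfrac12)^2:k\in\N\}$; and likewise on $(-\pi,0)$ with $y(-\pi)=0$ and either $y(0^-)=0$ or $y'(0^-)=0$ the spectra are $\{k^2\}$ and $\{(k-\tfrac12)^2\}$ respectively. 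For case~I we additionally need $\lambda$ to be an eigenvalue on $(-\pi,0)$ with the $\lambda$-dependent condition $\nu(\lambda)y'(0^-)+y(0^-)=0$; since $q=0$ this eigencondition is a fixed transcendental equation in $\lambda$ once $\nu$ is fixed, and crucially its solution set does not depend on where we place the $c_i$.

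First I would dispose of the trivial sub-cases. If $\eta>0$ and $\kappa>0$ (or $N=M=0$), then neither $\mu$ nor $\nu$ has a pole, $\mu$ and $\nu$ are entire (indeed affine or constant), Lemma~\ref{simple} applies at every $\lambda$, and there are no double eigenvalues regardless of the remaining parameters; so in that regime any choice works. The real content is when $N\ge 1$ or $M\ge 1$.

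Next I would handle case~III: a pole common to $\mu$ and $\nu$ forces a double eigenvalue only if it lies in $\{k^2\}$ (the $(0,\pi)$ Dirichlet spectrum) and in $\{(k-\tfrac12)^2\}$ (the $(-\pi,0)$ Neumann-at-$0$ spectrum) simultaneously — but these two sets are disjoint, so case~III can never produce a double eigenvalue here and imposes no constraint. For case~II, the constraint is: no pole $d_j$ of $\nu$ may lie in both $\{(k-\tfrac12)^2:k\in\N\}$ (eigenvalues on $(-\pi,0)$ with $y'(0^-)=0$) and the zero set of the transcendental function $g(\lambda):=\cos(\sqrt\lambda\,\pi)\mu(\lambda)-\sqrt\lambda\sin(\sqrt\lambda\,\pi)\cdot(-1)$ coming from $\mu(\lambda)y(0^+)=y'(0^+)$, $y(\pi)=0$ on $(0,\pi)$; the cleanest way to guarantee this is simply to keep every $d_j$ away from the countable set $\{(k-\tfrac12)^2\}$. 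For case~I, symmetrically, no pole $c_i$ of $\mu$ may simultaneously be an eigenvalue on $(0,\pi)$ with $y(0^+)=0=y(\pi)$, i.e. lie in $\{k^2:k\in\N\}$, and be an eigenvalue on $(-\pi,0)$ with $y(-\pi)=0$, $\nu(\lambda)y'(0^-)+y(0^-)=0$; again it suffices to keep each $c_i$ out of $\{k^2:k\in\N\}$.

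So the construction is: pick $\nu$ first, choosing its poles $d_1<\cdots<d_M$ to be any $M$ reals disjoint from $\{(k-\tfrac12)^2:k\in\N\}$ and its zeros interlacing them (with $a_j>0$ automatic from the interlacing, by the Note preceding this theorem), and with $\kappa\ge 0$; this fixes the function $\nu$, hence fixes the transcendental eigencondition on $(-\pi,0)$ relevant to case~I, but we do not even need to analyze that eigencondition. Then pick $\mu$ with poles $c_1<\cdots<c_N$ any $N$ reals disjoint from $\{k^2:k\in\N\}$, interlacing zeros giving $b_i>0$, and $\eta\ge 0$. With these choices, cases~I, II, III of Theorem~\ref{double} are all vacuous — every pole of $\mu$ fails the $(0,\pi)$-Dirichlet membership test, every pole of $\nu$ fails the $(-\pi,0)$-Neumann membership test — so every eigenvalue is geometrically simple. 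I expect the only mildly delicate point to be bookkeeping the several half-line eigencondition formulas and confirming that placing the poles as prescribed is compatible with the sign conditions $a_j,b_i>0$ and the prescribed orderings; this is exactly what the Note before the theorem is there to supply, and it is routine. One should also note, as remarked in the text, that interpolating between this construction and the previous theorem's yields any intermediate count of double eigenvalues.
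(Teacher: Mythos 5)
Your proposal is correct and follows essentially the same route as the paper's proof: the paper simply instantiates your recipe with the concrete choices $c_i=(i+\frac{1}{2})^2\notin\{k^2:k\in\N\}$ and $d_j=j^2\notin\{(k-\frac{1}{2})^2:k\in\N\}$, with $\eta=\kappa=0$ and interlacing zeros so that the Note yields $b_i,a_j>0$, and then observes that cases I--III of Theorem \ref{double} are all vacuous. One slip to correct in your write-up: taking $\eta>0$ (resp.\ $\kappa>0$) does \emph{not} make $\mu$ (resp.\ $\nu$) entire --- by (\ref{mu})--(\ref{nu}) the poles are the $c_i$ (resp.\ $d_j$) whenever $N\ge 1$ (resp.\ $M\ge 1$), and $\eta,\kappa$ only contribute the linear terms --- but this aside is harmless, since the pole-avoidance conditions you impose are independent of $\eta$ and $\kappa$ and your main construction already covers every case. (Minor: the eigencondition on $(0,\pi)$ for $\mu(\lambda)y(0^+)=y'(0^+)$, $y(\pi)=0$ with $q=0$ reads $\mu(\lambda)\sin(\sqrt{\lambda}\,\pi)+\sqrt{\lambda}\cos(\sqrt{\lambda}\,\pi)=0$ rather than your $g$, but you never rely on its explicit form.)
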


\begin{proof}
Let $$\mu(\lambda)=\frac{\begin{displaystyle}\prod_{j=1}^{N}\left(j^2-\lambda\right)\end{displaystyle}}{\begin{displaystyle}\prod_{k=1}^{N}\left(\left(k+\frac{1}{2}\right)^2-\lambda\right)\end{displaystyle}}$$
and
$$\nu(\lambda)=\frac{\begin{displaystyle}\prod_{j=1}^{M}\left(\left(j+\frac{1}{2}\right)^2-\lambda\right)\end{displaystyle}}
{\begin{displaystyle}\prod_{k=1}^{M}\left(k^2-\lambda\right)\end{displaystyle}}.$$
The poles of $\mu$ are $3^2/4,5^2/4,\dots, (2N+1)^2/4$ which are not eigenvalues of the 
Sturm-Liouville problem on $[0,\pi]$ with boundary conditions $y(0^+)=0=y(\pi)$.
The poles of $\nu$ are at $1^2,\dots,M^2$ which are not eigenvalues of the Sturm-Liouville problem on $[-\pi,0]$ with boundary conditions $y(-\pi)=0=y'(0^-)$.
\end{proof}


\section{The Green's function}\label{green}
Let $u_{-}(x;\lambda)$ denote the solution of (\ref{SL}) on $[-a,0)$ satisfying the initial conditions
\begin{equation}
 u_{-}(-a;\lambda) = \sin\alpha\quad\mbox{and}\quad u_{-}'(-a;\lambda) = \cos\alpha, \label{u-}
\end{equation}
and  $v_{+}(x;\lambda)$ denote the solution of (\ref{SL}) on $(0,b]$ satisfying the terminal
conditions
\begin{equation}
 v_{+}(b;\lambda) = \sin\beta\quad\mbox{and} \quad v_{+}'(b;\lambda) = \cos\beta. \label{v+}
\end{equation}
 For $$\lambda\in \Omega :=\{\lambda\in\C\,|\, \mu(\lambda), \, \nu(\lambda) \, \in \C \}$$ we have 
that $u_{-}(x;\lambda)$ and $v_{+}(x;\lambda)$ can, respectively, be extended to solutions $u(x;\lambda)$ and $v(x;\lambda)$  of \eqref{SL}, \eqref{TX-0} and \eqref{TX-1}.
In particular we define $u$ to be $u_-$ on $[-a,0)$ and $v$ to be $v_+$ on $(0,b]$.
Using the 
transfer matrix $T$ as defined in Lemma \ref{simple}, we define $u$ on $(0,b]$ to be the solution of \eqref{SL}  obeying the 
initial condition
\begin{equation}\label{matrix ext u}
\left[\begin{array}{c} u(0^{+};\lambda)\\ u'(0^{+};\lambda)\end{array}\right] = T \left[\begin{array}{c} u(0^{-};\lambda)\\ u'(0^{-};\lambda)\end{array}\right]
\end{equation}
and $v$ on $[-a,0)$ to be the solution of \eqref{SL}  obeying the terminal condition
\begin{equation}\label{matrix ext v}
\left[\begin{array}{c} v(0^{-};\lambda)\\ v'(0^{-};\lambda)\end{array}\right] = T^{-1} \left[\begin{array}{c} v(0^{+};\lambda)\\ v'(0^{+};\lambda)\end{array}\right]. 
\end{equation}
As $T$ has determinant $1$ and
$$\left[\begin{array}{cc} u(0^{+};\lambda)&v(0^{+};\lambda)\\ u'(0^{+};\lambda)&v'(0^{+};\lambda)\end{array}\right] = T \left[\begin{array}{cc} u(0^{-};\lambda)&v(0^{-};\lambda)\\ u'(0^{-};\lambda)&v'(0^{-};\lambda)\end{array}\right]$$ 
it follows that the Wronskian of $u$ and $v$, $W[u,v]$, is constant on $[-a,0)\cup (0,b]$.
We denote $\psi(\lambda) = W[u,v]$.

For a pole of $\mu(\lambda)$ or $\nu(\lambda)$ the above unique extensions are not  available.

\begin{thm}\label{thm-green}
For $\lambda\in \Omega$ not an eigenvalue of  (\ref{SL})-(\ref{TX-1}), the Green's function of (\ref{SL})-(\ref{TX-1}) is given by
\begin{equation}
G(x,t;\lambda) = \begin{cases} \cfrac{u(x;\lambda)v(t;\lambda)}{\psi(\lambda)}, & \mbox{ if  } x<t \mbox{ and } x,t \in [-a,0)\cup(0,b],\\ \cfrac{u(t;\lambda)v(x;\lambda)}{\psi(\lambda)}, & \mbox{ if }t<x \mbox{ and } x,t \in [-a,0)\cup(0,b],\end{cases}
\end{equation}
in the sense that if $h\in L^2(-a,b)$  then
\begin{equation}
g(x;\lambda) = \int_{-a}^{b} G(x,t;\lambda) h(t)dt:={\mathfrak G}_\lambda h
\end{equation}
is a solution of $(\lambda-\ell)g=h$ on $(-a,0)$ and $(0,b)$, such that $g$ obeys the boundary conditions
(\ref{BC=a})-(\ref{BC=b}) and the transmission conditions (\ref{TX-0})-(\ref{TX-1}).
\end{thm}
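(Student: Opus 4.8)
The plan is to verify directly that the function
\[
g(x;\lambda)=\frac{v(x;\lambda)}{\psi(\lambda)}\int_{-a}^{x}u(t;\lambda)h(t)\,dt+\frac{u(x;\lambda)}{\psi(\lambda)}\int_{x}^{b}v(t;\lambda)h(t)\,dt,
\]
which is exactly $g(x;\lambda)=\int_{-a}^{b}G(x,t;\lambda)h(t)\,dt$ read off from the displayed kernel, has the four required properties. First I would collect the structural facts already available: $u$ and $v$ solve \eqref{SL} on $(-a,0)\cup(0,b)$, $u$ satisfies \eqref{BC=a}, $v$ satisfies \eqref{BC=b}, both satisfy \eqref{TX-0}--\eqref{TX-1} by construction through $T$ in \eqref{matrix ext u}--\eqref{matrix ext v}, and $W[u,v]=\psi(\lambda)$ is the same constant on $(-a,0)$ and on $(0,b)$. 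Since $\lambda$ is not an eigenvalue, $u$ and $v$ cannot be linearly dependent (a dependence, propagated across $0$ by $T$, would make $u$ an eigenfunction), so $\psi(\lambda)\neq 0$ and $G$ is well defined; note also that $\lambda\in\Omega$ is precisely what guarantees that $T$, and hence the extensions of $u$ and $v$ across $0$, exist.

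That $(\lambda-\ell)g=h$ on each of $(-a,0)$ and $(0,b)$ is the classical variation-of-parameters computation. Differentiating $g$ once, the two boundary terms produced by the Leibniz rule cancel, so $g'$ has the same two-integral shape with $u,v$ replaced by $u',v'$; differentiating again and using $W[u,v]=uv'-u'v=\psi(\lambda)$ gives $g''=\frac{v''}{\psi}\int_{-a}^{x}uh+\frac{u''}{\psi}\int_{x}^{b}vh+h$. Substituting $u''=(q-\lambda)u$ and $v''=(q-\lambda)v$ collapses the integral terms to $(q-\lambda)g$, so $g''=(q-\lambda)g+h$ and therefore $\ell g=-g''+qg=\lambda g-h$, i.e. $(\lambda-\ell)g=h$. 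One also notes that $x\mapsto\int_{-a}^{x}uh$ and $x\mapsto\int_{x}^{b}vh$ are absolutely continuous, so $g$ and $g'$ are absolutely continuous on each subinterval while $\ell g=\lambda g-h\in L^{2}$; thus $g$ is a solution there in the required sense.

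For the endpoint conditions, at $x=-a$ the first integral vanishes, so $\bigl(g(-a),g'(-a)\bigr)$ is a scalar multiple of $\bigl(u(-a),u'(-a)\bigr)$ and \eqref{BC=a} passes from $u$ to $g$; symmetrically \eqref{BC=b} passes from $v$ to $g$ at $x=b$. The real content is the transmission conditions. Put $A=\int_{-a}^{0}u(t;\lambda)h(t)\,dt$ and $B=\int_{0}^{b}v(t;\lambda)h(t)\,dt$, and let $x\to0^{\pm}$ in the formulas for $g$ and $g'$; the integrals over the shrinking interval between $x$ and $0$ tend to $0$, and one obtains
\[
\begin{bmatrix}g(0^{\pm};\lambda)\\ g'(0^{\pm};\lambda)\end{bmatrix}=\frac{1}{\psi(\lambda)}\begin{bmatrix}u(0^{\pm};\lambda)&v(0^{\pm};\lambda)\\ u'(0^{\pm};\lambda)&v'(0^{\pm};\lambda)\end{bmatrix}\begin{bmatrix}B\\ A\end{bmatrix}.
\]
By \eqref{matrix ext u}--\eqref{matrix ext v} the matrix at $0^{+}$ is $T$ times the matrix at $0^{-}$, hence $\bigl(g(0^{+}),g'(0^{+})\bigr)^{\top}=T\bigl(g(0^{-}),g'(0^{-})\bigr)^{\top}$, which by Lemma \ref{simple} is precisely \eqref{TX-0}--\eqref{TX-1}. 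I expect the only real care to be needed in this last step: under the integral sign in the formula for $g$ near $0$ the factors $u(t;\lambda)$ for $t$ slightly positive and $v(t;\lambda)$ for $t$ slightly negative are the $T$- and $T^{-1}$-extended branches, so one must track which one-sided values $u(0^{\pm}),v(0^{\pm})$ actually appear in order to apply the matrix identity correctly. Once this bookkeeping is in place there is no analytic obstruction, since $\lambda\in\Omega$ keeps $T$ and both extensions finite and non-degenerate.
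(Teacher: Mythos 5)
Your proposal is correct and follows essentially the same route as the paper: the same variation-of-parameters formula $g\psi = u\int_x^b vh + v\int_{-a}^x uh$, the same twice-differentiation using $W[u,v]=\psi$ to get $(\lambda-\ell)g=h$, and the same observation that $(g,g')$ at each endpoint and at $0^\pm$ is a fixed linear combination of $(u,u')$ and $(v,v')$, so the boundary and transmission conditions are inherited from $u$ and $v$ (your explicit use of the $T$-matrix relation at $0^\pm$ is just the paper's linearity argument restated via Lemma \ref{simple}). Your added remarks that $\psi(\lambda)\neq 0$ because $\lambda$ is not an eigenvalue, and on the absolute continuity of $g$ and $g'$, are correct points the paper leaves implicit.
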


\begin{proof}
From the above definition of $G$ and $g$, we have
\begin{equation}\label{solution-gf}
g(x)\psi = u(x)\int_{x}^{b} v(t)h(t)\,dt+v(x)\int_{-a}^{x} u(t)h(t)\,dt,
\end{equation}
where for brevity we have suppressed the argument $\lambda$. Differentiating $g$ gives
\begin{equation}\label{solution-gf-der}
g'(x)\psi = u'(x)\int_{x}^{b} v(t)h(t)\,dt+v'(x)\int_{-a}^{x} u(t)h(t)\,dt,
\end{equation}
and a further differentiation gives
\begin{align}\label{solution-gf-l}
g''(x)\psi
&=u''(x)\int_{x}^{b} v(t)h(t)\,dt+v''(x)\int_{-a}^{x} u(t)h(t)\,dt+ h(x)\psi\\
&=(q(x)-\lambda)g(x)\psi+ h(x)\psi 
\end{align}
so  $(\lambda-\ell)g=h$.
Further from (\ref{solution-gf}) and (\ref{solution-gf-der})
\begin{equation*}
\left[\begin{array}{c}g(x) \\ g'(x) \end{array}\right] \psi =\left[\begin{array}{c} u(x)\\ u'(x) \end{array}\right]\int_{x}^{b} v(t)h(t)\,dt+\left[\begin{array}{c} v(x)\\ v'(x) \end{array}\right]\int_{-a}^{x} u(t)h(t)\,dt,
\end{equation*}
from which it follows that 
\begin{equation*}
\left[\begin{array}{c}g(-a) \\ g'(-a) \end{array}\right] \psi =\left[\begin{array}{c} u(-a)\\ u'(-a) \end{array}\right]\int_{-a}^{b} v(t)h(t)\,dt,
\end{equation*}
so (\ref{BC=a}) is obeyed as this condition is obeyed by $u$, and
\begin{equation*}
\left[\begin{array}{c}g(b) \\ g'(b) \end{array}\right] \psi =
\left[\begin{array}{c} v(b)\\ v'(b) \end{array}\right]\int_{-a}^{b} u(t)h(t)\,dt,
\end{equation*}
so (\ref{BC=b}) is obeyed as this condition is obeyed by $v$. Moreover, 
\begin{equation*}
\left[\begin{array}{c}g(0^\pm) \\ g'(0^\pm) \end{array}\right] \psi =\left[\begin{array}{c} u(0^\pm)\\ u'(0^\pm) \end{array}\right]\int_{0}^{b} v(t)h(t)\,dt+\left[\begin{array}{c} v(0^\pm)\\ v'(0^\pm) \end{array}\right]\int_{-a}^{0} u(t)h(t)\,dt,
\end{equation*}
so (\ref{TX-0}) and (\ref{TX-1}) are obeyed as these conditions are obeyed by $u$ and $v$. 
\end{proof}

\begin{thm}\label{thm-green-zero}
For $\lambda$ a pole of $\mu(\lambda)$ or $\nu(\lambda)$ and not an eigenvalue of  (\ref{SL})-(\ref{TX-1}), the Green's function of (\ref{SL})-(\ref{TX-1}) is given by
\begin{equation}
G(x,t;\lambda) = \begin{cases} g^-(x,t;\lambda), &  \mbox{ if } x,t \in [-a,0),\\ 
   g^+(x,t;\lambda), & \mbox{ if } x,t \in (0,b],\\
   0, &\mbox{ if } (x,t)\in (0,b]\times[-a,0) \\ & \mbox{ or } (x,t)\in [-a,0) \times (0,b],\end{cases}
\end{equation}
in the sense that if $h\in L^2(-a,b)$  then
\begin{equation}
g(x;\lambda) = \int_{-a}^{b} G(x,t;\lambda) h(t)dt:={\mathfrak G}_\lambda h
\end{equation}
is a solution of $(\lambda-\ell)g=h$ on $(-a,0)$ and $(0,b)$ such that  $g$ obeys the boundary conditions
(\ref{BC=a})-(\ref{BC=b}) and the transmission conditions (\ref{TX-0})-(\ref{TX-1}).
Here $g^-(x,t;\lambda)$ is the Green's function for the Sturm-Liouville equation \eqref{SL}
on $[-a,0]$ with boundary conditions \eqref{BC=a} and 
\begin{align}
 g'(0^-)\nu(\lambda)&=-g(0^-),\quad\mbox{if $\lambda$ is a pole of $\mu$ but not of $\nu$},\label{g-1}\\
 g'(0^-)&=0,\quad\mbox{if $\lambda$ is a pole of $\mu$ and of $\nu$},\label{g-2}
  \end{align}
and 
$g^+(x,t;\lambda)$ is the Green's function for the Sturm-Liouville equation \eqref{SL}
on $[0,b]$ with boundary conditions \eqref{BC=b} and 
\begin{align}
g(0^+)\mu(\lambda) &=g'(0^+),\quad\mbox{if $\lambda$ is a pole of $\nu$ but not of $\mu$}.\label{g+2}\\
g(0^+)&=0,\quad\mbox{if $\lambda$ is a pole of $\mu$ and of $\nu$},\label{g+1}
\end{align}
\end{thm}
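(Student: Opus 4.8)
The plan is to reduce the whole statement to the decoupled picture already anticipated in the Note following Theorem~\ref{double}, and then to run the single-interval Green's function construction of Theorem~\ref{thm-green} separately on $[-a,0]$ and on $[0,b]$. First I would record precisely how (\ref{TX-0})--(\ref{TX-1}) degenerate at a pole. If $\lambda$ is a pole of $\mu$, multiplying (\ref{TX-0}) by $\prod_{i}(\lambda-c_i)$ (which kills the right-hand side while $(\lambda-c_k)\mu(\lambda)\to b_k^2\ne 0$) forces $y(0^+)=0$; substituting this into $\Delta y=y(0^+)-y(0^-)$ turns (\ref{TX-1}) into $y'(0^-)\nu(\lambda)=-y(0^-)$ when $\lambda$ is not also a pole of $\nu$, and, after the analogous clearing of denominators in $\nu$, into $y'(0^-)=0$ when it is. The case in which $\lambda$ is a pole of $\nu$ only is symmetric, yielding $y'(0^-)=0$ on the left and $y(0^+)\mu(\lambda)=y'(0^+)$ on the right. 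In all three configurations the full problem splits into the two single-interval problems named in the statement, with no condition relating the traces at $0^-$ to those at $0^+$ (in particular $\Delta' y$, resp.\ $\Delta y$, is left free, which is exactly why the decoupling occurs).

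Next I would check that $\lambda$ is not an eigenvalue of either single-interval problem; this is where the hypothesis ``$\lambda$ not an eigenvalue of (\ref{SL})--(\ref{TX-1})'' enters. If $\lambda$ were an eigenvalue of, say, the left problem, then its eigenfunction extended by zero to $(0,b]$ satisfies (\ref{SL}), the outer conditions (\ref{BC=a})--(\ref{BC=b}), and the degenerate transmission conditions of the previous paragraph, hence is a nonzero eigenfunction of the full problem --- this is the zero-extension argument already used in the proof of Theorem~\ref{double} --- a contradiction; likewise for the right problem. Consequently the two Green's functions $g^-(\cdot,\cdot;\lambda)$ and $g^+(\cdot,\cdot;\lambda)$ exist, each given by the standard compact-interval formula: on $[-a,0]$ take the solution of (\ref{SL}) obeying (\ref{BC=a}) and the solution obeying whichever of (\ref{g-1})/(\ref{g-2}) applies, and form their product over the (nonzero) Wronskian; similarly on $[0,b]$ with (\ref{BC=b}) and whichever of (\ref{g+2})/(\ref{g+1}) applies.

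With $g^\pm$ in hand I would define $g$ through the block-diagonal kernel $G$ of the statement. Since the off-diagonal blocks of $G$ vanish, $g(x;\lambda)=\int_{-a}^{0}g^-(x,t;\lambda)h(t)\,dt$ for $x\in[-a,0)$ and $g(x;\lambda)=\int_{0}^{b}g^+(x,t;\lambda)h(t)\,dt$ for $x\in(0,b]$. Hence $(\lambda-\ell)g=h$ on $(-a,0)$ and on $(0,b)$, and $g$ obeys (\ref{BC=a}) at $-a$ and (\ref{BC=b}) at $b$, all by the defining properties of $g^-$ and $g^+$; this is verbatim the computation of Theorem~\ref{thm-green} carried out one interval at a time. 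Finally $g$ satisfies, on the left, the endpoint condition among (\ref{g-1})/(\ref{g-2}) that is in force and, on the right, the one among (\ref{g+2})/(\ref{g+1}) that is in force, and by the first paragraph these are precisely the valid reformulations of (\ref{TX-0})--(\ref{TX-1}) at this $\lambda$; so $g$ obeys the transmission conditions as well, completing the proof.

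The main obstacle is not analytic but interpretational: one must be careful that ``multiply by the vanishing factor and evaluate'' is the correct reading of (\ref{TX-0})--(\ref{TX-1}) at a pole, equivalently that it agrees with the limit $\lambda'\to$ pole along which $y(0^+;\lambda')=\Delta' y(\lambda')/\mu(\lambda')\to 0$ while $\Delta' y$ stays bounded and unconstrained, and that the three pole-configurations are matched with the correct pair of endpoint conditions (\ref{g-1})--(\ref{g+1}). Once that dictionary is fixed, everything else --- the ODE identity, the outer boundary conditions, and the existence of the two subinterval Green's functions --- is a straightforward transcription of the $\lambda\in\Omega$ argument to each of $[-a,0]$ and $[0,b]$.
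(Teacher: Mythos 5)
Your proposal is correct and follows essentially the same route as the paper: the decisive step in both is the zero-extension argument showing that if $\lambda$ were an eigenvalue of either decoupled single-interval problem it would be an eigenvalue of (\ref{SL})--(\ref{TX-1}), after which the block-diagonal kernel built from the standard subinterval Green's functions is immediate. Your additional care in spelling out how (\ref{TX-0})--(\ref{TX-1}) degenerate at the poles merely makes explicit what the paper records in its Introduction.
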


\begin{proof} If $\lambda$ is not an eigenvalue of either (\ref{SL}) on $[-a,0]$ with (\ref{BC=a}) and (\ref{g-1})-(\ref{g-2}), or (\ref{SL}) on $[0,b]$ with (\ref{BC=b}) and (\ref{g+1})-(\ref{g+2}),
then $g(x;\lambda)$ obeys (\ref{BC=a})-(\ref{BC=b}) and (\ref{TX-0})-(\ref{TX-1}), and
$(\lambda-\ell)g=h$ a.e. on $(-a,0)\cup(0,b)$. It thus remains only to show that if $\lambda$
is not an eigenvalue of (\ref{SL})-(\ref{TX-1}) then
 $\lambda$ is not an eigenvalue of either (\ref{SL}) on $[-a,0]$, (\ref{BC=a}) and (\ref{g-1})-(\ref{g-2}) or  (\ref{SL}) on $[0,b]$, (\ref{BC=b}) and (\ref{g+1})-(\ref{g+2}).
 
 If $\lambda$ is an eigenvalue of (\ref{SL}) on $[-a,0]$, (\ref{BC=a}) and (\ref{g-1})-(\ref{g-2}), then let $w$ denote an eigenfunction for this eigenvalue. This eigenfunction, $w$, can then be extended to $(-a,0)\cup (0,b)$ by setting to $0$ on $(0,b)$. It is now apparent that $w$ is an eigenfunction of (\ref{SL})-(\ref{TX-1}), making $\lambda$ an eigenvalue of (\ref{SL})-(\ref{TX-1}). 
 
 Similarly, if $\lambda$ is an eigenvalue (\ref{SL}) on $[0,b]$, (\ref{BC=b}) and (\ref{g+1})-(\ref{g+2}) with eigenfunction say $w$, then $w$, can then be extended to $(-a,0)\cup (0,b)$ by setting to $0$ on $(-a,0)$. It is now apparent that $w$ is an eigenfunction of (\ref{SL})-(\ref{TX-1}) making $\lambda$ an eigenvalue of (\ref{SL})-(\ref{TX-1}). 
 \end{proof}
 

\section{Hilbert space formulation}\label{Hilbert}
We now formulate (\ref{SL}) with boundary conditions (\ref{BC=a})-(\ref{BC=b}) and transmission conditions (\ref{TX-0})-(\ref{TX-1}) as an operator eigenvalue problem in
a Hilbert space $\mathcal{H}$.
For $\eta, \kappa >0$ we set
$\mathcal{H}=L^2(-a,b)\bigoplus \C^{N'}\bigoplus\C^{M'}$ and
\begin{equation}\label{block}
LY := \left[\begin{array}{ccc} \ell & 0 & 0\\
 \boldsymbol{\beta} \Delta' & [\gamma_i] & 0\\
  \boldsymbol{\alpha} \Delta & 0 & [\delta_j]\end{array}\right]\left[ \begin{array}{c} y\\ \mathbf{y}^1\\ \mathbf{y}^2\end{array}\right]
\end{equation}
with domain
$$\mathcal{D}(L)=\left\{  Y \, \left| \,\begin{array}{c}
y_1:=y|_{(-a,0)}\,\mbox{obeys}\,(\ref{BC=a}) \,\mbox{and has}\, y_1,y_1',\ell y_1 \in L^2(-a,0)\\ y_2:=y|_{(0,b)}\,\mbox{obeys}\,(\ref{BC=b})\,\mbox{and has}\, y_2,y_2',\ell y_2 \in L^2(0,b)\\ 
 -y(0^+)+\sigma \Delta' y - \left< {\bf y}^1,\boldsymbol{\beta}\right>=0, \\ 
y'(0^-)-\tau \Delta y - \left<{\bf y}^2, \boldsymbol{\alpha}\right>=0\end{array}\right.\right\},$$
where $Y:= \left[ \begin{array}{c}
y\\
\mathbf{y}^{1}\\ 
\mathbf{y}^{2}
\end{array}\right]$, $[\gamma_i]:={\rm diag}(\gamma_1,\dots,\gamma_{N'})$
$\boldsymbol{\beta} := (\beta_i)$,
${\bf y}^1:= (y_i^1)$,
$[\delta_j] := {\rm diag}(\delta_1,\dots,\delta_{M'})$,
$\boldsymbol{\alpha} := (\alpha_j)$  and ${\bf y}^2 := (y_j^2).$
For later reference, we take the inner product on $\mathcal{H}$ as
\begin{equation*}
\left<Y,Z\right> := (y,z) + \left< \mathbf{y}^1, \mathbf{z}^1 \right> + \left<\mathbf{y}^2, \mathbf{z}^2\right>,
\end{equation*}
where  
$(y,z)=\int_{-a}^{b}y\bar{z}dx$ and $\left<\cdot ,\cdot \right>$ denotes Euclidean inner product.

Replacement of $\boldsymbol{\beta},  \boldsymbol{\gamma}, \sigma, y(0^+), \Delta'y$ and $N'$ by $\boldsymbol{b}, \boldsymbol{c}, -\xi, -\Delta'y, y(0^+)$ and $N$ in the specification of $\mathcal{H}$, $L$ and $\mathcal{D}(L)$ above and the results of this section, below, yields the case of $\eta=0$, while replacement of $\boldsymbol{\alpha},  \boldsymbol{\delta}, \tau, y'(0^-), \Delta  y$ and $M'$ by $\boldsymbol{a}, \boldsymbol{d}, -\zeta, -\Delta y, y'(0^-)$ and $M$ yields the case of $\kappa=0$.

\begin{thm}
 The eigenvalue problems $LY = \lambda Y$ 
 and (\ref{SL}) with boundary conditions (\ref{BC=a})-(\ref{BC=b}) and transmission conditions (\ref{TX-0})-(\ref{TX-1}) are equivalent in the sense that
 $\lambda$ is an eigenvalue of  $LY = \lambda Y$ with eigenvector $Y$ if and only if 
 $\lambda$ is an eigenvalue, with eigenfunction $y$, of
 (\ref{SL}) with boundary conditions (\ref{BC=a})-(\ref{BC=b}) and transmission conditions
 (\ref{TX-0})-(\ref{TX-1}).
 Here, for $\eta,\kappa>0$, 
\begin{equation}\label{e-vector-1}
{\bf y}^1 = (\lambda I-[\gamma_i])^{-1}\boldsymbol{\beta}\Delta' y 
\end{equation}
if $\lambda \neq \gamma_i$ for all $i=1,\dots,N'$ while if $\lambda = \gamma_I$ for some $I \in \{1,\dots,N'\}$  then 
\begin{equation}\label{e-vector-i-poles} {\bf y}^1 = \frac{-y(0^+)}{\beta_I} \boldsymbol{e}^I,
\end{equation}
with $\boldsymbol{e}^I$  the vector in $\R^{N'}$ with all entries $0$ except the $I^{th}$ which is $1$, and  
\begin{equation}\label{e-vector-2}
{\bf y}^2 = (\lambda I-[\delta_j])^{-1}\boldsymbol{\alpha}\Delta y, 
\end{equation}
if $\lambda \neq \delta_j$ for all $j=1,\dots,M'$,
while if $\lambda = \delta_J$ for some $J \in \{1,\dots,M'\}$ then 
\begin{equation}\label{e-vector-j-poles}
 {\bf y}^2 = \frac{y'(0^-)}{\alpha_J} \boldsymbol{e}^J,
 \end{equation}
 with $\boldsymbol{e}^J$  the vector in $\R^{M'}$ with all entries $0$ except the $J^{th}$ which is $1$.
 The geometric multiplicity of $\lambda$ as an eigenvalue of $L$ is the same as the geometric multiplicity
 of $\lambda$ as an eigenvalue of (\ref{SL})-(\ref{TX-1}).
\end{thm}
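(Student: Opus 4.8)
The plan is to work throughout with $\eta,\kappa>0$; the cases $\eta=0$ or $\kappa=0$ then follow verbatim after the substitutions indicated just before the statement. Writing $Y=(y,\mathbf{y}^1,\mathbf{y}^2)$, I would first unpack the block equation $LY=\lambda Y$ coming from (\ref{block}) into three pieces: $\ell y=\lambda y$ on $(-a,0)$ and on $(0,b)$; $(\lambda I-[\gamma_i])\mathbf{y}^1=\boldsymbol{\beta}\,\Delta'y$; and $(\lambda I-[\delta_j])\mathbf{y}^2=\boldsymbol{\alpha}\,\Delta y$; these are to be read alongside the two scalar constraints $-y(0^+)+\sigma\Delta'y-\langle\mathbf{y}^1,\boldsymbol{\beta}\rangle=0$ and $y'(0^-)-\tau\Delta y-\langle\mathbf{y}^2,\boldsymbol{\alpha}\rangle=0$ defining $\mathcal D(L)$ (the boundary conditions (\ref{BC=a})--(\ref{BC=b}) being built into $\mathcal D(L)$ already). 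The one computation I would record at the outset is that, when $\lambda\notin\{\gamma_i\}$, $\langle(\lambda I-[\gamma_i])^{-1}\boldsymbol{\beta},\boldsymbol{\beta}\rangle=\sum_{i=1}^{N'}\beta_i^2/(\lambda-\gamma_i)$, so that by (\ref{1/mu}) the first constraint collapses to $-y(0^+)+\Delta'y/\mu(\lambda)=0$; symmetrically, when $\lambda\notin\{\delta_j\}$, the second collapses to $y'(0^-)-\Delta y/\nu(\lambda)=0$ by (\ref{1/nu}). These are exactly (\ref{TX-0}) and (\ref{TX-1}), including their degenerate readings: when $\lambda$ is a pole of $\mu$ one has $1/\mu(\lambda)=0$, forcing $y(0^+)=0$, as in the introduction.

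For the implication ``SL eigenfunction $\Rightarrow$ eigenvector of $L$'', given $\lambda$ with eigenfunction $y$ of (\ref{SL})--(\ref{TX-1}), I would define $\mathbf{y}^1,\mathbf{y}^2$ by (\ref{e-vector-1})--(\ref{e-vector-j-poles}) according to whether or not $\lambda$ is one of the $\gamma_i$, resp.\ $\delta_j$. If $\lambda\neq\gamma_i$ for all $i$, the second block equation holds by construction, and the identity above together with $\Delta'y=y(0^+)\mu(\lambda)$ gives the first domain constraint. If $\lambda=\gamma_I$, a zero of $\mu$, then (\ref{TX-0}) reads $\Delta'y=0$, so $(\lambda I-[\gamma_i])\mathbf{y}^1=\boldsymbol{\beta}\,\Delta'y=0$ is satisfied by $\mathbf{y}^1=(-y(0^+)/\beta_I)\boldsymbol{e}^I$; feeding this into the first domain constraint and using $\Delta'y=0$ reduces it to $-y(0^+)-\beta_I y_I^1=0$, which is precisely (\ref{e-vector-i-poles}). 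The treatment of $\mathbf{y}^2$, the third block equation and the second constraint via (\ref{TX-1}) is identical. Hence $Y\in\mathcal D(L)$ and $LY=\lambda Y$.

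For the converse, let $Y=(y,\mathbf{y}^1,\mathbf{y}^2)\in\mathcal D(L)$ with $Y\neq0$ and $LY=\lambda Y$; then $\ell y=\lambda y$ on each subinterval. If $\lambda\neq\gamma_i$ for all $i$, the second block equation gives (\ref{e-vector-1}) and, combined with the first domain constraint and (\ref{1/mu}), yields $\Delta'y=y(0^+)\mu(\lambda)$, i.e.\ (\ref{TX-0}) (with the pole reading $y(0^+)=0$ when $\lambda$ is a pole of $\mu$, since then $1/\mu(\lambda)=0$). If $\lambda=\gamma_I$, the $I$-th component of $(\lambda I-[\gamma_i])\mathbf{y}^1=\boldsymbol{\beta}\,\Delta'y$ forces $\beta_I\Delta'y=0$, hence $\Delta'y=0$, the remaining components force $y_i^1=0$ for $i\neq I$, and the first domain constraint then pins $y_I^1=-y(0^+)/\beta_I$; so again $y$ satisfies (\ref{TX-0}) and $\mathbf{y}^1$ is the prescribed function of $y$. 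Likewise for $\mathbf{y}^2$ and (\ref{TX-1}). It remains to rule out $y\equiv0$: if $y\equiv0$, then $y(0^+)=y'(0^-)=\Delta'y=\Delta y=0$, and the block equations together with the domain constraints force $\mathbf{y}^1=\mathbf{y}^2=0$, contradicting $Y\neq0$. Thus $y$ is an eigenfunction of (\ref{SL})--(\ref{TX-1}).

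Finally, for the multiplicity claim I would observe that in both directions $\mathbf{y}^1,\mathbf{y}^2$ are \emph{uniquely} determined by $y$ through (\ref{e-vector-1})--(\ref{e-vector-j-poles}), and that the map $y\mapsto(y,\mathbf{y}^1(y),\mathbf{y}^2(y))$ is linear (each formula is linear in $\Delta'y$, $\Delta y$, $y(0^+)$ or $y'(0^-)$, hence in $y$) and injective (it returns $y$ as the first component). By the two implications it carries the $\lambda$-eigenspace of (\ref{SL})--(\ref{TX-1}) \emph{onto} the $\lambda$-eigenspace of $L$, so the two spaces have equal dimension; with Theorem~\ref{double} this common value is at most $2$. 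The main delicacy throughout is the bookkeeping at the zeros and poles of $\mu$ and $\nu$: exactly when $\lambda\in\{\gamma_i\}$ or $\lambda\in\{\delta_j\}$ the matrices $\lambda I-[\gamma_i]$ and $\lambda I-[\delta_j]$ drop rank and one must switch from (\ref{e-vector-1})/(\ref{e-vector-2}) to (\ref{e-vector-i-poles})/(\ref{e-vector-j-poles}), and one must verify that the two scalar domain constraints faithfully reproduce every degenerate form of (\ref{TX-0})--(\ref{TX-1}) at zeros and poles of $\mu$ and $\nu$ recorded in the introduction.
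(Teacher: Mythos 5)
Your proposal is correct and follows essentially the same route as the paper: unpack the block form of $L$ together with the two domain constraints, use (\ref{1/mu}) and (\ref{1/nu}) to identify those constraints with (\ref{TX-0})--(\ref{TX-1}) away from the $\gamma_i,\delta_j$, handle $\lambda=\gamma_I$ or $\lambda=\delta_J$ by the degenerate forms of the block equations, and get the multiplicity statement from the linearity and injectivity of $y\mapsto Y$. Your explicit check that $y\equiv0$ forces $\mathbf{y}^1=\mathbf{y}^2=\mathbf{0}$ is a small point the paper leaves implicit, but it does not change the argument.
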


\begin{proof}
If $LY=\lambda Y$, then $\ell y = \lambda y$, where $y|_{(-a,0)}$, $y'|_{(-a,0)}$, $\ell y|_{(-a,0)} \in L^{2}(-a,0)$ and $y|_{(0,b)}, y'|_{(0,b)}, \ell y|_{(0,b)} \in L^{2}(0,b)$. Moreover $y$ obeys (\ref{BC=a}) and (\ref{BC=b}). 
By definition of $LY$, 
$\gamma_{i} y^{1}_{i} + \beta_{i}\Delta' y = \lambda y^{1}_{i}$  for all $i$, the domain conditions give
$-y(0^+)+\sigma \Delta' y - \left< {\bf y}^1,\boldsymbol{\beta}\right>=0.$ 
Hence if $\lambda \neq \gamma_{i}$ for all $i$, then 
$y^{1}_{i} = \frac{\beta_{i}}{\lambda - \gamma_{i}}\Delta'y$ which with the domain condition gives
$$y(0^{+}) = \left[\sigma -\sum\limits_{i=1}^{N'} \frac{\beta_i^2}{\lambda -\gamma_i}\right] \Delta' y.$$
If $\lambda = \gamma_{I}$ then $\gamma_{i} y^{1}_{i} + \beta_{i}\Delta' y = \lambda y^{1}_{i}$ gives $\Delta' y = 0$ and $y^1_{i}=0$ for all $i\ne I$, which together with the domain condition gives $y^{1}_{I}=  \frac{-y(0^{+})}{\beta_{I}}$.
Hence $y$ obeys (\ref{TX-0}). 

Similarly
$\delta_{j}y^{2}_{j} + \alpha_{j}\Delta y = \lambda y^{2}_{j}$ for all $j$, giving $y^{2}_{j} = \frac{\alpha_{j}}{\lambda - \delta_{j}}\Delta y$ if $\lambda \neq \delta_{j}$ for all $j$, 
and $\Delta y = 0$ if for some $J$, $\lambda = \delta_{J}$ and $y_j^2=0$ for all $j\ne J$.
The domain condition $y'(0^-)-\tau \Delta y - \left<{\bf y}^2, \boldsymbol{\alpha}\right>=0$ in the case of $\lambda \neq \delta_{j}$ for all $j$ now gives that
\begin{equation}
y'(0^{-}) = \left[\tau + \sum\limits_{j=1}^{M'} \frac{\alpha_j^2}{\lambda -\delta_j}\right] \Delta y,\label{tx-2-nov}
\end{equation}
while for $\lambda = \delta_{J}$ the domain condition forces
$y^{2}_{J}=\frac{y'(0^{-})}{\alpha_{J}}$ from which (\ref{TX-1}) follows.

Hence, the eigenvalues of $L$ are eigenvalues of (\ref{SL}) with boundary conditions (\ref{BC=a})-(\ref{BC=b}) and transmission conditions (\ref{TX-0})-(\ref{TX-1}), with eigenfunction $y = \left[Y\right]_{0}$ (i.e. the functional component of $Y$).

For the converse, let $\lambda$ be an eigenvalue, with eigenfunction $y$, of (\ref{SL}) with boundary conditions (\ref{BC=a})-(\ref{BC=b}) and transmission conditions (\ref{TX-0})-(\ref{TX-1}). Here $\ell y=\lambda y$ on $(-a,0)\cup (0,b)$, with $y|_{(-a,0)}, y'|_{(-a,0)}, \ell y|_{(-a,0)} \in L^{2}(-a,0)$ and $y|_{(0,b)}, y'|_{(0,b)}, \ell y|_{(0,b)} \in L^{2}(0,b)$. Define $Y$ as given in (\ref{e-vector-1})-(\ref{e-vector-j-poles}). 

For $\lambda \neq \gamma_i$ for all $i$, from the form given for ${\bf y}^1$,  
$$[LY]_1=[\boldsymbol{\beta} \Delta' \,,\, [\gamma_i] \,,\, 0 ]Y=
\boldsymbol{\beta} \Delta'y +[\gamma_i]{\bf y}^1
= \lambda {\bf y}^1,$$
and since $y$ obeys (\ref{TX-0}),
$$\left<{\bf y}^1,\boldsymbol{\beta}\right>=\sum\limits_{i=1}^{N'}\frac{\beta _i^2}{\lambda -\gamma_i}\Delta' y= -y(0^+) +\sigma \Delta' y,$$
so the domain condition for ${\bf y}^1$ is obeyed. 

For $\lambda = \gamma_{I}$, for some $I\in \{1,...,N'\}$, we have 
$\Delta'y=0$ so
$$[LY]_1=[\boldsymbol{\beta} \Delta' \,,\, [\gamma_i] \,,\, 0 ]Y=
[\gamma_i]{\bf y}^1= \lambda {\bf y}^1,$$ 
since $y_i^1=0$ for $i\ne I$.
Also, since $y^1_I=-y(0^+)/\beta_I$,
$$\left<{\bf y}^1,\boldsymbol{\beta}\right>=-y(0^+)= -y(0^+) +\sigma \Delta' y,$$
and the domain condition relating to  ${\bf y}^1$ is obeyed.

If $\lambda \neq \delta_{j}$ for all $j = 1, \dots, M'$, from (\ref{e-vector-2}), 
$$[LY]_2= [\boldsymbol{\alpha} \Delta\,,\, 0 \,,\, [\delta_j]]Y=\boldsymbol{\alpha} \Delta y+ [\delta_j]{\bf y}^2=\lambda{\bf y}^2,$$
while  (\ref{TX-1}), (\ref{1/nu}) and (\ref{e-vector-2}) combined give that the domain condition
$$y'(0^-)-\tau \Delta y - \left<{\bf y}^2, \boldsymbol{\alpha}\right>=
y'(0^-)-\tau \Delta y -\sum\limits_{j=1}^{M'} \frac{\alpha_j^2}{\lambda - \delta_j} \Delta y=
0$$
is satisfied.

For $\lambda = \delta_{J}$, for some $J \in \{1,...,M'\}$, from (\ref{TX-1}) and (\ref{1/nu}) we have $\Delta y=0$ which, together with (\ref{e-vector-j-poles}), gives
$$[LY]_2= [\boldsymbol{\alpha} \Delta\,,\, 0 \,,\, [\delta_j]]Y= [\delta_j]{\bf y}^2=\lambda{\bf y}^2,$$
while $\Delta y=0$ and (\ref{e-vector-j-poles}) give that the domain condition
$$y'(0^-)-\tau \Delta y - \left<{\bf y}^2, \boldsymbol{\alpha}\right>=0$$
is satisfied.

Next we consider the correspondence of geometric multiplicities. If $\lambda$ is an eigenvalue of 
 (\ref{SL})-(\ref{TX-1}) with eigenfunctions $y^{[1]},\dots,y^{[k]}$ which are linearly independent then
 the vectors $Y^{[1]},\dots,Y^{[k]}$ as given by (\ref{e-vector-1})-(\ref{e-vector-j-poles}) are linearly independent eigenvectors of $L$ for the eigenvalue $\lambda$. Hence the geometric multiplicity of $\lambda$ 
 as an eigenvalue of $L$ is at least as large as the  geometric multiplicity of $\lambda$ 
 as an eigenvalue  (\ref{SL})-(\ref{TX-1}).

 If   $Y^{[1]},\dots,Y^{[k]}$ are linearly independent eigenvectors of $L$ for the eigenvalue
 $\lambda$ then, from the first part of this theorem, the functional components  $y^{[1]}=[Y^{[1]}]_0,\dots,y^{[k]}=[Y^{[k]}]_0$ are eigenvectors of  (\ref{SL})-(\ref{TX-1}) for the eigenvalue
 $\lambda$. It remains only to prove their linear independence. 
If  there are $\rho_1,\dots,\rho_k$, not all zero, with  
 $$0=\sum_{n=1}^k\rho_n y^{[n]},$$
 then from (\ref{e-vector-1})-(\ref{e-vector-j-poles})
 $$\mathbf{0}= \sum_{n=1}^k\rho_n Y^{[n]},$$
 contradicting the linear independence of $Y^{[1]},\dots,Y^{[k]}$. Hence 
 $y^{[1]},\dots,y^{[k]}$ are linearly independent and the geometric multiplicity of $\lambda$ as
 an eigenvalue of $L$ coincides with its geometric multiplicity as an eigenvalue of (\ref{SL})-(\ref{TX-1}).
\end{proof}


\section{Self-adjointness}\label{section-self-adjoint}

In this section we show that $L$ is a self-adjoint (densely defined) operator in $\mathcal{H}$.

\begin{thm}\label{self adjoint}
 The operator $L$ is self-adjoint in $\mathcal{H}$. 
\end{thm}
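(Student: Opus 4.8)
The plan is to verify self-adjointness directly by the standard two-step route: first establish symmetry of $L$ on $\mathcal{D}(L)$ via Lagrange's identity together with the specific algebraic structure of the domain conditions, and then show $\mathcal{D}(L^*)\subseteq\mathcal{D}(L)$, i.e. that no proper symmetric extension exists. First I would take $Y,Z\in\mathcal{D}(L)$ and compute $\langle LY,Z\rangle-\langle Y,LZ\rangle$. The $L^2(-a,b)$ part of $\langle LY,Z\rangle$ yields, after two integrations by parts on each of $(-a,0)$ and $(0,b)$, the usual boundary terms: at $x=-a$ and $x=b$ these vanish because both $y_i$ and $z_i$ obey the self-adjoint conditions \eqref{BC=a}--\eqref{BC=b}, while the terms at $0^\pm$ collect into the bilinear expression $[y,z](0^-)-[y,z](0^+)$ where $[y,z]:=y'\bar z-y\bar z'$. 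Writing $y'(0^+)=y'(0^-)+\Delta'y$ and similarly for the other traces, this boundary remnant can be reorganized in terms of $y(0^-),\Delta y,\Delta'y$ (and the conjugate quantities for $z$). The finite-dimensional blocks contribute $\langle\boldsymbol{\beta}\Delta'y,\mathbf{z}^1\rangle+\langle[\gamma_i]\mathbf{y}^1,\mathbf{z}^1\rangle-\langle\mathbf{y}^1,[\gamma_i]\mathbf{z}^1\rangle$ and the analogous $\mathbf{y}^2$ term; since $[\gamma_i],[\delta_j]$ are real diagonal, the middle terms cancel, leaving $\langle\boldsymbol{\beta}\Delta'y,\mathbf{z}^1\rangle-\langle\mathbf{y}^1,\boldsymbol{\beta}\Delta'z\rangle$ plus the $\mathbf{y}^2$-analogue.

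The key computation is then to feed in the two domain conditions $-y(0^+)+\sigma\Delta'y-\langle\mathbf{y}^1,\boldsymbol{\beta}\rangle=0$ and $y'(0^-)-\tau\Delta y-\langle\mathbf{y}^2,\boldsymbol{\alpha}\rangle=0$ (and their $z$-counterparts). Using these to eliminate $\langle\mathbf{y}^1,\boldsymbol{\beta}\rangle$, $\langle\mathbf{z}^1,\boldsymbol{\beta}\rangle$, etc., the $\sigma$- and $\tau$-terms are real and drop out, and the remaining differential and vector boundary contributions should cancel exactly, giving $\langle LY,Z\rangle=\langle Y,LZ\rangle$. (The $\eta=0$ or $\kappa=0$ cases are handled by the substitution table given just before the theorem, so it suffices to treat $\eta,\kappa>0$.) I would also note density of $\mathcal{D}(L)$ in $\mathcal{H}$: $C_c^\infty$ functions supported away from $0,-a,b$ together with arbitrary vector components are dense, and only finitely many linear constraints are imposed, so $\overline{\mathcal{D}(L)}=\mathcal{H}$.

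For the reverse inclusion $\mathcal{D}(L^*)\subseteq\mathcal{D}(L)$, suppose $Z\in\mathcal{D}(L^*)$ with $L^*Z=W$. Testing against $Y$ whose functional part is an arbitrary $C_c^\infty$ function on $(-a,0)\cup(0,b)$ and whose vector parts are free forces $\ell z=[W]_0$ in the distributional sense on each subinterval, hence $z_1,z_1',\ell z_1\in L^2(-a,0)$, $z_2,z_2',\ell z_2\in L^2(0,b)$, and forces the vector components of $Z$ to satisfy $[\gamma_i]\mathbf{z}^1=[\,\text{vector part of }W\,]$-type relations consistent with membership in the domain. Then, allowing $Y$ to range over all of $\mathcal{D}(L)$ and using the now-legitimate integrations by parts, the identity $\langle LY,Z\rangle=\langle Y,W\rangle$ reduces to the vanishing, for all admissible boundary data of $Y$, of a boundary bilinear form in the traces $z(-a),z'(-a),z(b),z'(b),z(0^\pm),z'(0^\pm)$ and $\mathbf{z}^1,\mathbf{z}^2$. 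Because $\mathcal{D}(L)$ realizes a Lagrangian (maximal isotropic) subspace of the relevant boundary symplectic form — the conditions \eqref{BC=a}, \eqref{BC=b} account for the endpoints, and the two coupled conditions at $0$ together with the diagonal vector equations account for the interface and the $\C^{N'}\oplus\C^{M'}$ slots — the annihilator of $\mathcal{D}(L)$'s boundary data is $\mathcal{D}(L)$'s boundary data itself, forcing $Z$ to obey \eqref{BC=a}--\eqref{BC=b} and both domain conditions at $0$; hence $Z\in\mathcal{D}(L)$.

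The main obstacle is the bookkeeping at $x=0$: one must show that the interface part of the boundary form, after incorporating the two rational transmission conditions recast via $1/\mu$ and $1/\nu$ and the finite-dimensional blocks, is exactly balanced — neither degenerate (which would make $L$ non-symmetric) nor leaving slack (which would make $L$ a proper restriction of a symmetric operator). Concretely, one checks that the map sending $Y\in\mathcal{D}(L)$ to its boundary data $\big(y(0^-),\Delta y,\Delta'y,\mathbf{y}^1,\mathbf{y}^2\big)$ has image of exactly half the dimension of the ambient boundary space and is isotropic for the form produced in the symmetry calculation; this is the linear-algebra heart of the proof and is where the precise coefficients $\sigma,\tau,\beta_i,\alpha_j,\gamma_i,\delta_j$ (with $\beta_i,\alpha_j>0$ real, $\gamma_i,\delta_j$ real) enter decisively.
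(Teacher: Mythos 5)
There are two genuine gaps. First, your density argument fails as stated: the set of $C_c^\infty$ functions (supported away from $0$) \emph{with arbitrary vector components} is not contained in $\mathcal{D}(L)$. For such a function all traces at $0^\pm$ vanish, so the interface domain conditions collapse to $\left<\mathbf{y}^1,\boldsymbol{\beta}\right>=0$ and $\left<\mathbf{y}^2,\boldsymbol{\alpha}\right>=0$, which are \emph{continuous} constraints on the finite-dimensional slots whose solution set is a proper closed subspace of $\C^{N'}\oplus\C^{M'}$ — so "finitely many linear constraints" does not rescue density here. Density holds only because the constraints couple the vector components to the (unbounded) trace functionals $y(0^\pm)$, $y'(0^\pm)$; the paper exploits this by explicitly constructing, for each prescribed pair $(\mathbf{f}^1,\mathbf{f}^2)$, a smooth $w$ whose traces at $0^\pm$ are chosen so that $[w,\mathbf{f}^1,\mathbf{f}^2]^T\in\mathcal{D}(L)$, and then approximating the remainder by $C_0^\infty$ functions. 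Some such construction is unavoidable.

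Second, in the step $\mathcal{D}(L^*)\subseteq\mathcal{D}(L)$ you appeal to $\mathcal{D}(L)$ being a Lagrangian (maximal isotropic) subspace of the boundary form, but you never verify this, and you yourself flag it as "the linear-algebra heart of the proof." That verification \emph{is} the proof of maximality; asserting it is circular. Note also that the testing you can actually do is constrained: an element of $\mathcal{D}(L)$ with compactly supported functional part can only carry vector data orthogonal to $\boldsymbol{\beta}$ (resp.\ $\boldsymbol{\alpha}$), so this only yields $[L^*Z]_1-[\gamma_i]\mathbf{z}^1\in\operatorname{span}\{\boldsymbol{\beta}\}$, not the full identification of the adjoint's action, and it does not by itself produce the two domain conditions for $Z$. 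The paper closes this by testing against a four-parameter family $W=W(A,B,C,D)$ of smooth elements of $\mathcal{D}(L)$ whose traces at $0^\pm$ are tied to the vector data, obtaining relations $V_1-(AS_2+CS_1)\boldsymbol{\beta}=0$ and $V_2-(BS_2+DS_1)\boldsymbol{\alpha}=0$ for all $A,B,C,D$, whence $S_1=S_2=0$ (the domain conditions for $Z$) and $V_1=V_2=0$. Your symmetry computation and the elliptic-regularity step for recovering $z\in H^2$ on each subinterval and the conditions \eqref{BC=a}--\eqref{BC=b} do match the paper; it is the interface/vector-slot maximality argument and the density argument that need to be actually carried out, and in both cases the missing ingredient is the same explicit family of test elements with prescribed nonzero traces at $0^\pm$.
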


\begin{proof}
We present the proof for the case of $\eta,\kappa>0$, the proofs for the other cases being similar.

We begin by showing that $\mathcal{D}(L)$ is dense in $\mathcal{H}$.
Let $A,B,C,D\in \mathbb{R}$, 
\begin{equation}\label{w_m}
F = \left[ \begin{array}{c} f\\ {\bf f^1}\\ {\bf f^2}\end{array}\right]\in \mathcal{H} \quad\mbox{and}\quad 
W = \left[ \begin{array}{c} w\\ {\bf f^1}\\ {\bf f^2}\end{array}\right]
\end{equation}     
where $w$ is $C^\infty$ on $[-a,0)$ and $(0,b]$ with $w(-a)=w'(-a)=0=w(b)=w'(b)$ so that $w|_{[-a,0)}$ has an extension to a 
function $C^\infty$ on $[-a,0]$, $w|_{(0,b]}$ has an extension to a 
function $C^\infty$ on $[0,b]$ and
\begin{align*}
w(0^-)&= (\sigma C-A-1)\left<{\bf f^1},\boldsymbol{\beta}\right> +(\sigma D-B)\left<{\bf f^2},\boldsymbol{\alpha}\right>,\\
w(0^+)&=(\sigma C-1)\left<{\bf f^1},\boldsymbol{\beta}\right> +\sigma D\left<{\bf f^2},\boldsymbol{\alpha}\right>, \\
w'(0^-)&= \tau A\left<{\bf f^1},\boldsymbol{\beta}\right> +(\tau B+1)\left<{\bf f^2},\boldsymbol{\alpha}\right>,\\
w'(0^+)&=(\tau A+C)\left<{\bf f^1},\boldsymbol{\beta}\right> +(\tau B+D+1)\left<{\bf f^2},\boldsymbol{\alpha}\right>,
\end{align*}
then $W \in \mathcal{D}(L)$ and
\begin{align}
\Delta w&= A\left<{\bf f^1},\boldsymbol{\beta}\right> +B\left<{\bf f^2},\boldsymbol{\alpha}\right>,\label{delta-1}\\
\Delta' w&=C\left<{\bf f^1},\boldsymbol{\beta}\right> +D\left<{\bf f^2},\boldsymbol{\alpha}\right>.\label{delta-2}
\end{align}

As $q\in L^2(-a,b)$ it follows that 
$\left(C^\infty_0(-a,0)\bigoplus C^\infty_0(0,b)\right)\bigoplus \{ {\bf 0} \}\bigoplus \{ {\bf 0 }\}\subset \mathcal{D}(L)$.
Here, $C^\infty_0(-a,0)\bigoplus C^\infty_0(0,b)$ is dense in $L^2(-a,b)$ so  
there is a sequence $\{g_n\}\subset C^\infty_0(-a,0)\bigoplus C^\infty_0(0,b)$ with $g_n \to f-w$ in norm. Here,
$G_n:=\left[ g_n\    {\bf 0}\  {\bf 0}\right]^T\in \mathcal{D}(L)$
and thus $W+G_n\in \mathcal{D}(L)$. Now, $W+G_n \to F$ in norm as $n\to\infty$ giving that $\mathcal{D}(L)$ is dense in $\mathcal{H}$.

We  now show that $L$ is symmetric.
 Let $F, G\in\mathcal{D}(L)$, then the functional components $f$ and $g$ of $F$ and
$G$ respectively obey
\begin{equation*}
(\ell f,g)- (f,\ell g) = (-f'\bar{g} + f\bar{g}')(0^-)+ (f'\bar{g} - f\bar{g}')(0^+).
\end{equation*}
Moreover, the vector components satisfy
\begin{align*}
\left<\boldsymbol{\beta}\Delta' f+ [\gamma_i] {\bf f}^1, {\bf g}^1\right>- \left<{\bf f}^1, \boldsymbol{\beta}\Delta' g + [\gamma_i] {\bf g}^1\right> &= \left< \boldsymbol{\beta}\Delta' f, {\bf g}^1\right> - \left<{\bf f}^1, \boldsymbol{\beta}\Delta'  g\right>,\\
 \left< \boldsymbol{\alpha}\Delta f+ [\delta_j] {\bf f}^2, {\bf g}^2\right>- \left<{\bf f}^2, \boldsymbol{\alpha}\Delta g + [\delta_j] {\bf g}^2\right> &= \left< \boldsymbol{\alpha}\Delta f, {\bf g}^2\right> - \left<{\bf f}^2, \boldsymbol{\alpha}\Delta g\right>,
 \end{align*}
where the domain conditions give
\begin{align*}
\left< \boldsymbol{\beta}\Delta' f, {\bf g}^1\right> - \left<{\bf f}^1, \boldsymbol{\beta}\Delta' g\right> &= \Delta' f [ -\bar{g}(0^+) + \sigma\Delta'\bar{g}] - \Delta' \bar{g}[-f(0^+) + \sigma\Delta' f], 
\\
\left< \boldsymbol{\alpha}\Delta f, {\bf g}^2\right> - \left<{\bf f}^2, \boldsymbol{\alpha}\Delta g\right> &= \Delta f [\bar{g}'(0^-) - \tau \Delta\bar{g}] - \Delta \bar{g} [f'(0^-) - \tau\Delta f].
\end{align*}
Hence
\begin{eqnarray*}
\left< \boldsymbol{\beta}\Delta' f+ [\gamma_i]{\bf f}^1, {\bf g}^1\right>- \left<{\bf f}^1, \boldsymbol{\beta}\Delta' g +[\gamma_i] {\bf g}^1\right> &=& f(0^+)\Delta'\bar{g} - \bar{g}(0^+)\Delta' f ,\\
\left< \boldsymbol{\alpha}\Delta f+ [\delta_j] {\bf f}^2, {\bf g}^2\right>- \left<{\bf f}^2, \boldsymbol{\alpha}\Delta g + [\delta_j] {\bf g}^2\right> &=& \bar{g}'(0^-)\Delta f - f'(0^-)\Delta\bar{g}.
\end{eqnarray*}
Direct computation gives
\begin{align*}
 (f'\bar{g}-f\bar{g}')(0^-) &-(f'\bar{g} - f\bar{g}')(0^+)\\ &=
f(0^+)\Delta'\bar{g} - \bar{g}(0^+)\Delta' f
+ \bar{g}'(0^-)\Delta f -f'(0^-)\Delta\bar{g}, 
\end{align*}
thus $\left<LF, G\right> - \left<F, LG\right> = 0$ and so $L$ is symmetric, giving $\mathcal{D}(L)\subset \mathcal{D}(L^*)$.

To show that $L$ is self-adjoint it remains only to verify that $\mathcal{D}(L^*)\subset \mathcal{D}(L)$. 
Let $G\in \mathcal{D}(L^*)$ then
$\left<LF,G\right>=\left<F,L^*G\right>$ for all $F\in \mathcal{D}(L)$, and the map $F\mapsto \left<F,L^*G\right>$ defines a continuous linear functional on $\mathcal{H}$.
 Hence, the map $F\mapsto \left<LF,G\right>$ is a continuous linear functional on $\mathcal{H}$ restricted to the 
dense subspace $\mathcal{D}(L)$. In particular, there is $k\ge 0$ so that for all $F\in \left( C^\infty_0(-a,0)\bigoplus\{0\}\right)\bigoplus\{{\bf 0}\}\bigoplus\{{\bf 0} \}$
we have that  
\begin{eqnarray}
\left|\int_{-a}^0 f''\left(-\overline{g}+\int_{-a}^x\int_{-a}^t q\overline{g}\,d\tau\,dt\right)\,dx\right|\le k\|f\|_2,\label{bound}
\end{eqnarray}
for all $f\in C^\infty_0(-a,0)$. Hence, see \cite[Chapters 1 \& 2]{agmon},
\begin{eqnarray}
g-\int_{-a}^x\int_{-a}^t qg\,d\tau\,dt\in H^2(-a,0).\label{recursive}
\end{eqnarray}
We note here that $qg\in L^1(-a,0)$, giving that $\int_{-a}^t qg\,d\tau\,dt \in L^2(-a,0)$.
Hence, $g\in H^1(-a,0)$ and differentiating (\ref{recursive}) gives
\begin{eqnarray}
g'-\int_{-a}^x qg\,d\tau\,\in H^1(-a,0).\label{recursive-1}
\end{eqnarray}
Thus $g''$ exists as a weak derivative and is in $L^1(-a,0)$.
Applying the above in (\ref{bound}) gives
\begin{eqnarray}
\left|\int_{-a}^0 f\left(-\overline{g}''+q\overline{g}\right)\,dx\right|\le k\|f\|_2,\label{bound-1}
\end{eqnarray}
and hence $\ell^*g=\ell g$ exists in $L^2(-a,0)$.
Similarly, we obtain $g,g', \ell^*g=\ell g$ exists in $L^2(0,b)$.
Thus
$g\in H^2(-a,0)\bigoplus H^2(0,b)$ with $\ell^*g=\ell g \in L^2(-a,b)$.
In the light of the above, taking $F=[f\ \bf{0}\ \bf{0}]^T$ and varying $f$ through $C^\infty[-a,0)\bigoplus C^\infty(0,b]$ obeying (\ref{BC=a}), (\ref{BC=b}) and having $f^{(m)}(\pm 0)=0$ for all $m=0,1,2,\dots,$
 we obtain that $g$ obeys (\ref{BC=a}) and (\ref{BC=b}). 

Now let $W$ be as in (\ref{w_m}),
then we have that
\begin{align*}
&\int_{-a}^b \ell w\, \overline{g}\,dx+\left< \boldsymbol{\beta}\Delta' w + [\gamma_i] {\bf f}^1, [G]_1 \right> +\left<\boldsymbol{\alpha}\Delta w + [\delta_j]{\bf f}^2, [G]_2\right>\\
&=
\left<LW,G\right>\\
&=\left<W,L^*G\right>\\
&=\int_{-a}^b w\,\ell\overline{g}\,dx+ \left<{\bf f}^1, [L^*G]_1\right> +\left<{\bf f}^2, [L^*G]_2\right>.
\end{align*}

Applying integration by parts to the pair of integrals in the above expression we have
\begin{align*}
&(w'\overline{g})(0^+)-(w'\overline{g})(0^-)
+\left< \boldsymbol{\beta}\Delta' w + [\gamma_i] {\bf f}^1, [G]_1 \right> +\left<\boldsymbol{\alpha}\Delta w + [\delta_j]{\bf f}^2, [G]_2\right>
\\
&=(w\overline{g}')(0^+)-(w\overline{g}')(0^-)
+\left< {\bf f}^1, [L^*G]_1 \right>
+\left< {\bf f}^2, [L^*G]_2 \right>.
\end{align*}
Using (\ref{delta-1}), (\ref{delta-2}) and the domain conditions obeyed by $W$ to simplify the above we obtain
\begin{equation}
0=\left< {\bf f}^1, V_1-(AS_2+CS_1)\boldsymbol{\beta}\right>
+\left< {\bf f}^2, V_2-(BS_2+DS_1)\boldsymbol{\alpha} \right>,\label{2018-sa}
\end{equation}
where
\begin{align*}
V_1&=-\Delta'g\boldsymbol{\beta}-[\gamma_i][G]_1+[L^*G]_1,\\
V_2&=-\Delta g\boldsymbol{\alpha}-[\delta_j][G]_2+[L^*G]_2,\\
S_1&=g(0^+)-\sigma \Delta'g+\left<[G]_1,\boldsymbol{\beta}\right>,\\
S_2&=-g'(0^-)+\tau \Delta g+\left<[G]_2,\boldsymbol{\alpha}\right>.
\end{align*}
Varying ${\bf f}^1$ and ${\bf f}^2$ in (\ref{2018-sa}) gives
\begin{align}
0&=V_1-(AS_2+CS_1)\boldsymbol{\beta}, \label{2018-b}\\
0&=V_2-(BS_2+DS_1)\boldsymbol{\alpha}. \label{2018-c}
\end{align}
However (\ref{2018-b}) and (\ref{2018-c}) hold for all $A,B,C,D\in \mathbb{R}$, giving that $S_1$, $S_2$, $V_1$, $V_2=0$.
In particular $S_1=0=S_2$ give $G\in \mathcal{D}(L)$.
\end{proof}


\section{The resolvent operator}\label{section-resolvent}

The block operator form for $L$ given in (\ref{block}) together with the Green's function
for (\ref{SL}), (\ref{BC=a})-(\ref{BC=b}) with (\ref{TX-0})-(\ref{TX-1}) given in Theorems \ref{thm-green} and \ref{thm-green-zero} along with the domain conditions of $L$ lead to the expressions for the resolvent of $L$ given in the following theorems. In the interest of readability the construction of the resolvent will be done in two parts.

\begin{thm}\label{resolvent-1}
For $\lambda$ not an eigenvalue of $L$, $h\in L^2(-a,b)$ and $\lambda\ne \gamma_i,\delta_j$ for all $i,j$, if $\eta>0$ and $\kappa>0$, we have that
\begin{align}
(\lambda - L)^{-1}\left[\begin{array}{c}h\\ 0\\ 0\end{array}\right]
 = \left[
\begin{array}{c} {\mathfrak G}_\lambda h \\ (\lambda I- [\gamma_i])^{-1}\boldsymbol{\beta}\Delta' {\mathfrak G}_\lambda h
\\ (\lambda I- [\delta_j])^{-1}\boldsymbol{\alpha}\Delta {\mathfrak G}_\lambda h 
\end{array}
\right]=:G_0h=:\left[\begin{array}{c}f\\  {\mathbf f}^1\\  {\mathbf f}^2\end{array} \right],\label{2018-1}
\end{align} 
where for $\lambda=\gamma_I$ we replace $(\lambda I- [\gamma_i])^{-1}\boldsymbol{\beta}\Delta' {\mathfrak G}_\lambda h$  by 
$-\frac{1}{\beta_I}{\mathfrak G}_\lambda h(0^+){\bf e}^{I}$, and for $\lambda=\delta_J$ we replace
$(\lambda I- [\delta_j])^{-1}\boldsymbol{\alpha}\Delta {\mathfrak G}_\lambda h$  by 
$\frac{1}{\alpha_J}({\mathfrak G}_\lambda h)'(0^-){\bf e}^J$.
For $\eta=0$ replace $\boldsymbol{\beta}\Delta' {\mathfrak G}_\lambda h$, $-\frac{1}{\beta_I}{\mathfrak G}_\lambda h(0^+)$ and $\gamma_i$ by 
$\boldsymbol{b} {\mathfrak G}_\lambda h(0^+)$, $\frac{1}{b_I}\Delta'{\mathfrak G}_\lambda h$ and $c_i$. For $\kappa=0$,
replace $\boldsymbol{\alpha}\Delta{\mathfrak G}_\lambda h$, $\frac{1}{\alpha_J}({\mathfrak G}_\lambda h)'(0^-)$ and $\delta_j$  by 
$\boldsymbol{a}({\mathfrak G}_\lambda h)'(0^-)$, $-\frac{1}{a_J}\Delta{\mathfrak G}_\lambda h$ and $d_j$.
\end{thm}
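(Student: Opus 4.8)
The plan is to check directly that the triple on the right, which I write as $F=G_0h=[f\ \mathbf{f}^1\ \mathbf{f}^2]^T$, lies in $\mathcal{D}(L)$ and solves $(\lambda-L)F=[h\ 0\ 0]^T$. Since $\lambda$ is not an eigenvalue of $L$, the operator $\lambda-L$ is injective on $\mathcal{D}(L)$, so such an $F$ is the unique preimage of $[h\ 0\ 0]^T$, and the asserted formula for $(\lambda-L)^{-1}[h\ 0\ 0]^T$ follows. The two cases $\eta=0$ and $\kappa=0$ are then obtained verbatim from the substitution dictionary recorded after the definition of $L$ in Section~\ref{Hilbert}.

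First I would collect what Theorem~\ref{thm-green} already provides about the functional component. Since $\mathfrak{G}_\lambda$ in the statement is the Green's function of Theorem~\ref{thm-green}, $\lambda$ is tacitly in $\Omega$ (not a pole of $\mu$ or $\nu$); and by the equivalence theorem of Section~\ref{Hilbert}, $\lambda$ not an eigenvalue of $L$ means $\lambda$ is not an eigenvalue of (\ref{SL})-(\ref{TX-1}). Hence Theorem~\ref{thm-green} applies and $f=\mathfrak{G}_\lambda h$ satisfies $(\lambda-\ell)f=h$ on $(-a,0)$ and $(0,b)$, obeys (\ref{BC=a})-(\ref{BC=b}) and (\ref{TX-0})-(\ref{TX-1}), and has $f$, $f'$, $\ell f$ in $L^2$ on each subinterval. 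This already delivers the first two lines in the membership $F\in\mathcal{D}(L)$.

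Next I would verify the remaining two domain conditions and then apply $\lambda-L$. For $\lambda\neq\gamma_i$ the definition $\mathbf{f}^1=(\lambda I-[\gamma_i])^{-1}\boldsymbol{\beta}\Delta'f$ gives $\left<\mathbf{f}^1,\boldsymbol{\beta}\right>=\sum_i\frac{\beta_i^2}{\lambda-\gamma_i}\Delta'f=\left(\sigma-\frac{1}{\mu(\lambda)}\right)\Delta'f$ by (\ref{1/mu}); combining with $f(0^+)\mu(\lambda)=\Delta'f$ from (\ref{TX-0}) collapses $-f(0^+)+\sigma\Delta'f-\left<\mathbf{f}^1,\boldsymbol{\beta}\right>$ to $0$, and the computation for $\mathbf{f}^2$ using (\ref{1/nu}) and (\ref{TX-1}) is identical. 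In the removable-singularity subcase $\lambda=\gamma_I$ (necessarily a zero of $\mu$, so (\ref{TX-0}) forces $\Delta'f=0$) the replacement $\mathbf{f}^1=-\frac{1}{\beta_I}f(0^+)\mathbf{e}^I$ gives $\left<\mathbf{f}^1,\boldsymbol{\beta}\right>=-f(0^+)$, which again closes the domain condition; $\lambda=\delta_J$ is handled symmetrically with $\Delta f=0$. Applying $\lambda-L$: the first component is $(\lambda-\ell)f=h$; the second is $(\lambda I-[\gamma_i])\mathbf{f}^1-\boldsymbol{\beta}\Delta'f$, which vanishes by the very definition of $\mathbf{f}^1$ when $\lambda\neq\gamma_i$, while for $\lambda=\gamma_I$ its $I$-th entry is $(\lambda-\gamma_I)f^1_I-\beta_I\Delta'f=0$ and the remaining entries are $0$; the third component vanishes for the same reason.

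The proof is essentially bookkeeping, and I do not expect a genuine obstacle. The only point requiring care is matching the partial-fraction expansions (\ref{1/mu})-(\ref{1/nu}) against the transmission conditions (\ref{TX-0})-(\ref{TX-1}) so that the inner products $\left<\mathbf{f}^1,\boldsymbol{\beta}\right>$ and $\left<\mathbf{f}^2,\boldsymbol{\alpha}\right>$ telescope and the last two conditions defining $\mathcal{D}(L)$ close, together with the separate treatment of the removable singularities at $\lambda=\gamma_I$ and $\lambda=\delta_J$, where $\Delta'f$ respectively $\Delta f$ vanishes. No compactness or spectral-theoretic input beyond injectivity of $\lambda-L$ is needed.
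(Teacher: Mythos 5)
Your proposal is correct and follows essentially the same route as the paper: verify that $G_0h\in\mathcal{D}(L)$ by combining the properties of ${\mathfrak G}_\lambda h$ from Theorems \ref{thm-green} and \ref{thm-green-zero} with the partial-fraction identities (\ref{1/mu})--(\ref{1/nu}) to close the two domain conditions (including the special cases $\lambda=\gamma_I$, $\lambda=\delta_J$ where $\Delta'f$, respectively $\Delta f$, vanishes), and then check $(\lambda-L)G_0h=[h\ 0\ 0]^T$, which the paper dismisses as a straightforward formal verification. Your explicit appeal to injectivity of $\lambda-L$ to get uniqueness of the preimage is left implicit in the paper but is the same idea.
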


\begin{proof}
We present the proof for $\eta,\kappa>0$, the other cases are similar with the symbol replacements noted above. 

We begin by showing that $G_0h\in \mathcal{D}(L)$.
From the definition of ${\mathfrak G}_\lambda h$ it follows that $f|_{(-a,0)}$, $f|_{(-a,0)}'$, $\ell f|_{(-a,0)} \in L^2(-a,0)$, $f|_{(0,b)}$, $f|_{(0,b)}'$, $\ell f|_{(0,b)} \in L^2(0,b),$ and that $f$ obeys (\ref{BC=a})  and  (\ref{BC=b}).
Moreover,
\begin{equation}\label{G 1}
f(0^+) ={\mathfrak G}_\lambda h(0^+)= \left[\sigma - \boldsymbol{\beta}^T (\lambda - [\gamma_i])^{-1}\boldsymbol{\beta}\right]\Delta' {\mathfrak G}_\lambda h=
\sigma\Delta'f - \left<\boldsymbol{f}^1,\boldsymbol{\beta}\right>,
\end{equation}
if $\lambda \neq \gamma_i$ for all $i$, and 
\begin{equation}\label{G 2}
f'(0^-)=({\mathfrak G}_\lambda h)'(0^-)= \left[\tau  + \boldsymbol{\alpha}^T(\lambda - [\delta_j])^{-1}\boldsymbol{\alpha} \right]\Delta {\mathfrak G}_\lambda h= \tau\Delta f  + \left<\boldsymbol{f}^2,\boldsymbol{\alpha}\right>
\end{equation}
if $\lambda \neq \delta_j$ for all $j$.
Here (\ref{G 1}) and (\ref{G 2}) follow from the definition of the Green's operator in Theorem  \ref{thm-green} and Theorem \ref{thm-green-zero}. 
For $\lambda=\gamma_I$, it follows from the definition of the Green's operator that
$\Delta'{\mathfrak G}_\lambda h=0$. Thus setting $\boldsymbol{f}^1=-\frac{1}{\beta_I}{\mathfrak G}_\lambda h(0^+){\bf e}^I$ we have that (\ref{G 1}) is replaced by
\begin{equation}\label{G 3}
f(0^+)=-\left<\boldsymbol{f}^1,\boldsymbol{\beta}\right>=\sigma\Delta'f-\left<\boldsymbol{f}^1,\boldsymbol{\beta}\right>.
\end{equation}
For $\lambda = \delta_J$ we have that $\Delta \mathfrak{G}_\lambda h = 0$ by definition of $\mathfrak{G}_\lambda h$, and setting ${\bf f}^2 = \frac{1}{\alpha_J} (\mathfrak{G}_\lambda h)'(0^-){\bf e}^J$ replaces (\ref{G 2}) by
\begin{equation}\label{G 4}
f'(0^-) = \left\langle {\bf f}^2 , \boldsymbol{\alpha}\right\rangle = \tau \Delta f + \left\langle {\bf f}^1, \boldsymbol{\alpha}\right\rangle.
\end{equation}

Thus $G_0h\in \mathcal{D}(L)$.
The formal verification that 
$(\lambda - L)G_0h=[h,0,0]^T$ is straight forward.
\end{proof}

\begin{thm}\label{resolvent-gamma}
If $\lambda$ is not an eigenvalue of $L$ then we have that 
\begin{equation}
(\lambda-L)^{-1}\left[\begin{array}{c} 0\\ \mathbf{h}^1\\ \mathbf{h}^2\end{array} \right]
=\left[\begin{array}{c}f\\ \mathbf{f}^1\\ \mathbf{f}^2\end{array} \right],\label{baw-1}
\end{equation}
where 
\begin{equation}\label{gen}
f=A\chi_{[-a,0)}u_-+B\chi_{(0,b])}v_+.
\end{equation}
Here
\begin{equation}
\begin{bmatrix} A \\ B \end{bmatrix} = \Lambda \begin{bmatrix}\left<{\bf h}^1, {\bf p} \right> \\\left<{\bf h}^2, {\bf q} \right>\end{bmatrix}
\label{ABcoeff}
\end{equation}
with
\begin{equation}
{\bf p}=\begin{cases} (\lambda I- [c_i])^{-1}{\bf b}, & \eta =0, \\
					\mu(\lambda)(\lambda I-[\gamma_i])^{-1}\boldsymbol{\beta}, & \eta >0, \end{cases} \label{p}
\end{equation}
\begin{equation}
{\bf q}=\begin{cases} (\lambda I - [d_j])^{-1}{\bf a}, & \kappa =0, \\
					\nu(\lambda)(\lambda I-[\delta_j])^{-1}\boldsymbol{\alpha}, & \kappa >0, \end{cases} \label{q}
\end{equation}
\begin{equation*}
\Lambda= \frac{1}{D}
\begin{bmatrix}
-v_+(0^+) & -v_+'(0^+)+\mu(\lambda)v_+(0^+) \\ -u_-(0^-)-\nu(\lambda) u_-'(0^-)& -u_-'(0^-)
\end{bmatrix} 
\end{equation*}
and $D=u_-'(0^-)v_+(0^+)-(v_+'(0^+)-\mu(\lambda)v_+(0^+))(u_-(0^-)+\nu(\lambda)u_-'(0^-))$.
Therefore $f, {\bf f}^1$ and ${\bf f}^2$ are given uniquely. 
At poles, $f$ can be determined by residue calculations. 
Also, 
\begin{align}
-\boldsymbol{\beta}\Delta' f + (\lambda I-[\gamma_i]){\bf f}^{1}&={\bf h}^{1}, \quad \eta >0, \label{OP1}\\
-{\bf b} f(0^+) + (\lambda I-[c_i]){\bf f}^{1}&={\bf h}^{1}, \quad \eta = 0, \label{OP3}\\
-\boldsymbol{\alpha}\Delta f + (\lambda I-[\delta_j]){\bf f}^{2}&={\bf h}^{2}, \quad \kappa >0, \label{OP2}\\
-{\bf a}f'(0^-) + (\lambda I-[d_j]){\bf f}^{2}&={\bf h}^{2}, \quad \kappa = 0. \label{OP4}
\end{align}
\end{thm}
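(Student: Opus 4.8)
The plan is to solve $(\lambda-L)Y=[0\ \mathbf{h}^1\ \mathbf{h}^2]^{T}$ directly from the block form \eqref{block}. Since $L$ is self-adjoint (Theorem~\ref{self adjoint}) and $\lambda$ is not an eigenvalue, a solution $Y=[f\ \mathbf{f}^1\ \mathbf{f}^2]^{T}\in\mathcal{D}(L)$ exists and is unique, so it suffices to identify it. Reading \eqref{block} row by row, the equation is equivalent to $(\lambda-\ell)f=0$ on $(-a,0)\cup(0,b)$ together with the component equations \eqref{OP1}, \eqref{OP2} for $\eta,\kappa>0$ (and \eqref{OP3}, \eqref{OP4} in the cases $\eta=0$, $\kappa=0$); this already records \eqref{OP1}--\eqref{OP4}. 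Because $f$ solves the homogeneous equation on each subinterval and membership in $\mathcal{D}(L)$ forces \eqref{BC=a} on $(-a,0)$ and \eqref{BC=b} on $(0,b)$, necessarily $f|_{(-a,0)}=Au_-$ and $f|_{(0,b)}=Bv_+$ for scalars $A,B$, which is \eqref{gen}.

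The next step is to eliminate the vector components. For $\lambda\neq\gamma_i,\delta_j$, equations \eqref{OP1}, \eqref{OP2} give $\mathbf{f}^1=(\lambda I-[\gamma_i])^{-1}(\mathbf{h}^1+\boldsymbol{\beta}\Delta' f)$ and $\mathbf{f}^2=(\lambda I-[\delta_j])^{-1}(\mathbf{h}^2+\boldsymbol{\alpha}\Delta f)$. Substituting these into the two scalar domain conditions of $\mathcal{D}(L)$ and using $\langle(\lambda I-[\gamma_i])^{-1}\boldsymbol{\beta},\boldsymbol{\beta}\rangle=\sigma-1/\mu(\lambda)$ from \eqref{1/mu} and $\langle(\lambda I-[\delta_j])^{-1}\boldsymbol{\alpha},\boldsymbol{\alpha}\rangle=1/\nu(\lambda)-\tau$ from \eqref{1/nu}, the domain conditions collapse to
\[
\Delta' f=\mu(\lambda)f(0^+)+\langle\mathbf{h}^1,\mathbf{p}\rangle,\qquad
\Delta f=\nu(\lambda)f'(0^-)-\langle\mathbf{h}^2,\mathbf{q}\rangle,
\]
with $\mathbf{p},\mathbf{q}$ as in \eqref{p}, \eqref{q}. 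Inserting $f=A\chi_{[-a,0)}u_-+B\chi_{(0,b]}v_+$, together with $f(0^+)=Bv_+(0^+)$, $f'(0^-)=Au_-'(0^-)$, $\Delta f=Bv_+(0^+)-Au_-(0^-)$ and $\Delta' f=Bv_+'(0^+)-Au_-'(0^-)$, turns this into the linear system
\[
\begin{bmatrix}
-u_-'(0^-) & v_+'(0^+)-\mu(\lambda)v_+(0^+)\\
-u_-(0^-)-\nu(\lambda)u_-'(0^-) & v_+(0^+)
\end{bmatrix}
\begin{bmatrix}A\\ B\end{bmatrix}
=\begin{bmatrix}\langle\mathbf{h}^1,\mathbf{p}\rangle\\ -\langle\mathbf{h}^2,\mathbf{q}\rangle\end{bmatrix}.
\]

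The determinant of the coefficient matrix equals $-D$. I would check $D\neq0$ in two complementary ways: expanding the transfer-matrix extension $u$ of $u_-$ across $0$ as in Section~\ref{green} gives $\psi(\lambda)=W[u,v]=-D$, so $D=0$ would make $\lambda$ an eigenvalue; equivalently, a nontrivial solution of the homogeneous system yields, via the homogeneous versions of \eqref{OP1}--\eqref{OP2}, a nonzero $Y\in\mathcal{D}(L)$ with $LY=\lambda Y$, which is excluded. Hence the system inverts uniquely, and Cramer's rule reproduces exactly \eqref{ABcoeff} with the stated $\Lambda$ and $D$; back-substitution then fixes $\mathbf{f}^1,\mathbf{f}^2$. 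The cases $\eta=0$ and/or $\kappa=0$ follow verbatim under the symbol substitutions introduced in Section~\ref{Hilbert} (and recorded again in Theorem~\ref{resolvent-1}), now using \eqref{OP3}/\eqref{OP4} and the first branches of \eqref{p}, \eqref{q}. When $\lambda$ coincides with one of $\gamma_i,\delta_j$ (so $\lambda I-[\gamma_i]$ or $\lambda I-[\delta_j]$ is singular) or with a pole $c_i,d_j$ of $\mu,\nu$, the displayed formulas need interpretation: \eqref{OP1}/\eqref{OP2} force $\Delta' f=0$ or $\Delta f=0$ and pin the affected coordinate of $\mathbf{f}^1,\mathbf{f}^2$ as in \eqref{e-vector-i-poles}, and $f$ is then recovered by passing to the limit in the formula above — the indicated residue computation — using analyticity of both sides in $\lambda$ on the resolvent set.

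The main obstacle is the reduction to the $2\times2$ system together with the identification of $D$ with $-\psi(\lambda)$: one must verify that $D$ is genuinely the characteristic determinant of the \emph{full} problem \eqref{SL}--\eqref{TX-1} at $\lambda$, even when $\lambda$ is a pole of $\mu$ or $\nu$, so that $D\neq0$ is precisely the non-eigenvalue hypothesis. The remaining work is bookkeeping with the partial-fraction identities \eqref{1/mu}, \eqref{1/nu} and the symbol dictionary for the $\eta=0$, $\kappa=0$ cases.
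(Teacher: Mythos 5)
Your proposal is correct and follows essentially the same route as the paper: write $f=Au_-\chi_{[-a,0)}+Bv_+\chi_{(0,b]}$, solve \eqref{OP1}--\eqref{OP2} for $\mathbf{f}^1,\mathbf{f}^2$, substitute into the domain conditions using \eqref{1/mu}--\eqref{1/nu} to get the same $2\times2$ system, and rule out $D=0$ by producing an eigenfunction from a null vector (your extra observation that $D=-\psi(\lambda)$ is a nice consistency check the paper does not state). One small correction in your sketch of the exceptional case: at $\lambda=\gamma_I$ the $I$-th row of \eqref{OP1} gives $\Delta' f=-h^1_I/\beta_I$ (and similarly $\Delta f=-h^2_J/\alpha_J$ at $\lambda=\delta_J$), not $\Delta' f=0$ or $\Delta f=0$ as you assert --- those vanish only for the eigenvector problem where $\mathbf{h}^1=\mathbf{h}^2=0$; the paper uses these nonzero values to pin down $f^1_I$ and $f^2_J$ explicitly before the residue computation for $A,B$.
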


\begin{proof}
 The general solution of $(\lambda-\ell)f=0$ on $[-a,0)\cup (0,b]$ obeying boundary conditions
(\ref{BC=a}) and (\ref{BC=b}) is given by (\ref{gen}), where $u_-, v_+$ 
are as defined at the beginning of Section \ref{green} and $u_-, v_+  \not \equiv 0$ obey (\ref{u-}) and (\ref{v+}) respectively. By (\ref{baw-1}), the operator conditions
(\ref{OP1})-(\ref{OP4}) follow.

From the domain of the operator $L$ we obtain the domain conditions
\begin{align}
 -f(0^+)+\sigma \Delta' f - \left< {\bf f}^1,\boldsymbol{\beta}\right>&=0, \quad \eta >0, \label{DC1} \\ 
 \Delta' f - \xi f(0^+) - \left< {\bf f}^1,{\bf b}\right> &=0, \quad \eta =0, \label{DC3} \\
f'(0^-)-\tau \Delta f - \left<{\bf f}^2, \boldsymbol{\alpha}\right>&=0, \quad \kappa >0, \label{DC2} \\
-\Delta f+\zeta f'(0^-) - \left<{\bf f}^2, {\bf a}\right>&=0, \quad \kappa =0. \label{DC4}
\end{align}

\noindent $\underline{ \eta >0}:$ Suppose that $\lambda \neq \gamma_i$ for all $i.$ Substituting ${\bf f}^1$ from (\ref{OP1}) into (\ref{DC1}) results in
\begin{equation*}
-f(0^+)+\sigma \Delta' f - \left< (\lambda I-[\gamma_i])^{-1}({\bf h}^1+\boldsymbol{\beta} \Delta' f),\boldsymbol{\beta}\right> =0.
\end{equation*}
Using (\ref{1/mu}) we get
\begin{equation}
-f(0^+)+\frac{1}{\mu(\lambda)}\Delta' f= \left< {\bf h}^1, (\lambda I-[\gamma_i])^{-1} \boldsymbol{\beta} \right>. \label{DC1_eta+}
\end{equation}
If $\lambda=\gamma_I$ then from (\ref{OP1}) we have $\Delta' f= -\frac{ h^1_I}{\beta_I}$. Also for $i\neq I$, $f^1_i=\frac{h^1_i+\beta_i\Delta' f}{\gamma_I-\gamma_i}$.
Thus (\ref{DC1}) gives
\begin{equation}\label{lambda=gammaI}
-f(0^+) -\sigma \frac{h^1_I}{\beta_I}-\sum_{i\neq I} \frac{\beta_i}{\beta_I}\frac{\beta_I h_i^1 - \beta_i h_I^1}{\gamma_I - \gamma_i}  =  \beta_I f^1_I. 
\end{equation}

\noindent $\underline{\kappa > 0}:$ If $\lambda \neq \delta_j$ for all $j$, substituting ${\bf f}^2$ from (\ref{OP2}) into (\ref{DC2}) results in
\begin{equation*}
f'(0^-)-\tau \Delta f - \left< (\lambda I-[\delta_j])^{-1}({\bf h}^2+\boldsymbol{\alpha} \Delta f),\boldsymbol{\alpha}\right> =0,
\end{equation*}
using (\ref{1/nu}) we get
\begin{equation}
f'(0^-)-\frac{1}{\nu(\lambda)}\Delta f= \left< {\bf h}^2, (\lambda I-[\delta_j])^{-1} \boldsymbol{\alpha} \right>. \label{DC2_kappa+}
\end{equation}
If $\lambda=\delta_J$ then from (\ref{OP2}) we have $\Delta f= -\frac{ h^2_J}{\alpha_J}$. Also for $j\neq J$, $f^2_j=\frac{h^2_j+\alpha_j\Delta f}{\delta_J-\delta_j}$.
Thus (\ref{DC2}) gives
\begin{equation}
f'(0^-) +\tau \frac{h^2_J}{\alpha_J}-\sum_{j\neq J} \frac{\alpha_j}{\alpha_J}\frac{\alpha_J h_j^2  - \alpha_j h_J^2}{\delta_J - \delta_j}  =  \alpha_J f^2_J.
\label{lambda=deltaJ}
\end{equation}

\noindent $\underline{\eta= 0} :$ For $\lambda\neq c_i$ for all $i$, combining (\ref{OP3}) with (\ref{DC3}) and using (\ref{mu}) gives
\begin{equation}
\Delta'f-\mu(\lambda)f(0^+)=\left< {\bf h}^1, (\lambda I-[c_i])^{-1}{\bf b}\right>.
\label{DC1_eta0}
\end{equation}
For $\lambda=c_I$ we obtain $f(0^+) = -\frac{h_I^1}{b_I}$ and 
\begin{equation}
\Delta'f +\xi \frac{h^1_I}{b_I}-\sum_{i\neq I} \frac{b_i}{b_I}\frac{b_I h_i^1 - b_i h_I^1}{c_I - c_i} =  b_I f^1_I. \label{lambda=cI}
\end{equation}

\noindent $\underline{\kappa =0} :$ For $\lambda \neq d_j$ for all $j$, combining (\ref{OP4}) with (\ref{DC4}) and using (\ref{nu}) we obtain
\begin{equation}
-\Delta f +\nu(\lambda) f'(0^-)=\left< {\bf h}^2, (\lambda I - [d_j])^{-1}{\bf a}\right>. \label{DC2_kappa0}
\end{equation}
When $\lambda=d_J$ we get $f'(0^-) = -\frac{h_J^2}{a_J}$ and 
\begin{equation}
-\Delta f -\zeta \frac{h^2_J}{a_J}-\sum_{j\neq J} \frac{a_j}{a_J}\frac{a_J h_j^2 - a_j h_J^2}{d_J - d_j} =  a_J f^2_J. \label{lambda=dJ}
\end{equation}

Now using the domain conditions (\ref{DC1_eta+}), (\ref{DC1_eta0}), (\ref{DC2_kappa+}) and (\ref{DC2_kappa0}) we get the following equations for $A$ and $B$:
\begin{align*}
\eta >0: & \quad -\frac{u_-'(0^-)}{\mu(\lambda)} A+\left( \frac{v_+'(0^+)}{\mu(\lambda)} -v_+(0^+) \right)B = 
\left< {\bf h}^1, (\lambda I - [\gamma_i])^{-1}\boldsymbol{\beta}\right>,\\
\eta = 0: & \quad -u_-'(0^-)A + (v_+'(0^+)-\mu(\lambda) v_+(0^+))B=\left< {\bf h}^1, (\lambda I -[c_i])^{-1} {\bf b}\right>,\\ 
\kappa >0: & \quad \left(u_-'(0^-)+ \frac{u_-(0^-)}{\nu(\lambda)} ) \right)A -\frac{v_+(0^+)}{\nu(\lambda)} B = 
\left< {\bf h}^2, (\lambda I- [\delta_j])^{-1}\boldsymbol{\alpha}\right>, \\
\kappa =0: & \quad (u_-(0^-)+\nu(\lambda)u_-'(0^-))A-v_+(0^+)B = \left< {\bf h}^2, (\lambda I - [d_j])^{-1}{\bf a}\right>,
\end{align*}
which can be written in the matrix form
\begin{equation*}
\begin{bmatrix}
-u_-'(0^-) & v_+'(0^+)-\mu(\lambda)v_+(0^+) \\ u_-(0^-)+\nu(\lambda) u_-'(0^-)& -v_+(0^+)
\end{bmatrix} 
\begin{bmatrix} A \\ B \end{bmatrix} =
\begin{bmatrix}
\left<{\bf h}^1, {\bf p} \right> \\\left<{\bf h}^2, {\bf q} \right>
\end{bmatrix},
\end{equation*}
hence giving (\ref{ABcoeff}), where ${\bf p}$, ${\bf q}$ are given by (\ref{p}) and (\ref{q}).

 Note that $D \neq0$ since if $D=0$, then $\left[\begin{array}{c}A\\ B\end{array}\right]$ 
could be taken as a non-zero vector in the null space of the matrix 
$$\begin{bmatrix}
-u_-'(0^-) & v_+'(0^+)-\mu(\lambda)v_+(0^+) \\ u_-(0^-)+\nu(\lambda) u_-'(0^-)& -v_+(0^+)
\end{bmatrix}$$
and the resulting function $f$ would be an eigenfunction of (\ref{SL})-(\ref{TX-1}),
 which contradicts the assumption that $\lambda$ is not an eigenvalue of $L$. Hence $D \ne 0$ and $A$ and $B$ are as given in (\ref{ABcoeff}). 
 
 Thus $f$ has been uniquely determined, provided that $\lambda \neq \gamma_i, c_i$ for all $i$ and $\lambda \neq \delta_j, d_j$ for all $j$. We provide an example at the end to demonstrate how residue calculations are used to determine $A$ and $B$ from (\ref{ABcoeff}) at poles. 
 
 Now $f_i^1$ and $f_j^2$ are uniquely determined by (\ref{OP1}), (\ref{OP3}), (\ref{OP2}) and (\ref{OP4}) for $\lambda \neq \gamma_i$ ($\eta >0$), 
$\lambda \neq c_i$ ($\eta =0$), $\lambda \neq \delta_j$ ($\kappa >0$) and $\lambda \neq d_j$ ($\kappa =0$) respectively.

For $\lambda= \gamma_I$ ($\eta>0$) or $\lambda=c_I$ ($\eta=0$) we have that $f_I^1$ is uniquely determined by (\ref{lambda=gammaI}) and (\ref{lambda=cI}) respectively, while for $\lambda=\delta_J$ ($\kappa>0$) or $\lambda=d_J$ ($\kappa=0$) we have that $f_J^2$ is uniquely determined by (\ref{lambda=deltaJ}) and (\ref{lambda=dJ}) respectively.
\end{proof}

\begin{ex}
We consider the case where $\mu(\lambda)$ has a pole at $\lambda = c_I$ and $\nu(\lambda) \in \mathbb{C}$. For $\lambda \neq c_I$ we can rewrite equation (\ref{ABcoeff}) as
$$ \left[\begin{array}{c} A\\ B\end{array}\right] =\frac{\mu(\lambda)}{D} \left[\begin{array}{cc} -v_+(0^+) & \frac{-v_+'(0^+)}{\mu(\lambda)} + v_+(0^+)\\ -u_-(0^-) - \nu(\lambda) u_-'(0^-) & \frac{-u_-'(0^-)}{\mu(\lambda)}\end{array}\right] \left[ \begin{array}{c} \frac{\left< {\bf h}^1, {\bf p}\right>}{\mu(\lambda)}\\ \left< {\bf h}^2, {\bf q}\right>\end{array}\right].$$
Here, $\frac{D}{\mu(\lambda)} \rightarrow (u_-(0^-) +\nu(\lambda)u_-'(0^-))v_+(0^+)$ as $\lambda \rightarrow c_I$. Note that this limit is non-zero else (\ref{TX-0}) and (\ref{TX-1}) would imply that either $\chi_{[-a,0)}u_-$ or $\chi_{(0,b]} v_+$ are eigenfunctions, contradicting the assumption that $\lambda = c_I$ is not an eigenvalue. Moreover, by (\ref{mu}) we obtain for the case of $\eta =0$ that
$$(\lambda - c_I)\mu(\lambda) = \frac{\xi \prod\limits_{i=1}^N (\lambda - c_i) + \sum\limits_{i=1}^N b_i^2 \prod\limits_{k\neq i} (\lambda - c_k)}{\prod\limits_{i\neq I} (\lambda - c_i)} \rightarrow \frac{1}{b_I^2} \mbox{ as } \lambda \rightarrow c_I.$$
Hence, as $\lambda \rightarrow c_I$,
$$\frac{\left< {\bf h}^1, {\bf p}\right>}{\mu(\lambda)} \rightarrow \begin{cases} \frac{h_I^1}{b_I}, & \eta =0,\\ \left<{\bf h}^1 , (c_I I -[\gamma_i])^{-1} \boldsymbol{\beta}\right>, & \eta >0.\end{cases}$$ 
If, in addition, we have that $\nu(c_I) = 0$ then $c_I = \delta_J$ and from (\ref{1/nu}) we obtain, for the case of $\kappa >0$, that 
$$(\lambda - \delta_J) \frac{1}{\nu(\lambda)} = \frac{\tau \prod\limits_{j=1}^{M'}(\lambda - \delta_j) +\sum\limits_{j=1}^{M'} \alpha_j^2 \prod\limits_{k\neq j} (\lambda - \delta_k)}{\prod\limits_{j\neq J}(\lambda - \delta_j)} \rightarrow \alpha_J^2 \mbox{ as } \lambda \rightarrow \delta_J.$$
So, as $\lambda \rightarrow \delta_J$,
$$\frac{\left< {\bf h}^2, {\bf q}\right>}{\mu(\lambda)} \rightarrow \begin{cases} \left< {\bf h}^2 , (\delta_J I - [d_j])^{-1} {\bf b}\right>, & \kappa = 0\\ \frac{h_J^2}{\alpha_J}, & \kappa >0.\end{cases}$$ 

Thus $A$ and $B$ can be obtained uniquely. In particular, for $\eta = 0$ and $\kappa >0$ with $\lambda = c_I = \delta_J$ we obtain 
 $$A = \frac{1}{u_-(0^-)}\left[ -\frac{h_I^1}{b_I} + \frac{h_J^2}{\alpha_J}\right],\quad  B = -\frac{1}{v_+(0^+)} \frac{h_I^1}{b_I},$$
which agrees with the conditions $f(0^+) = -\frac{h_I^1}{b_I}$ ($\eta = 0$, $\lambda = c_I$) and $\Delta f = -\frac{h_J^2}{\alpha_J}$ ($\kappa>0$, $\lambda = \delta_J$) obtained in the proof of Theorem \ref{resolvent-gamma}.
\end{ex}

\subsection*{Acknowledgment}
The authors would like to thank the referee and editor for their valuable comments.

\end{document}